\title[Orientation of piecewise powers of a minimal homeomorphism]{Orientation of piecewise powers of a minimal homeomorphism}
\author{Colin D. Reid}
\address{University of Newcastle\\School of Information and Physical Sciences\\University Drive\\Callaghan NSW 2308 Australia}
\keywords{Cantor minimal systems, topological full group; MSC2020: 37B99, 20B99, 20M20}
\newtheorem{thm}{Theorem}[section]
\newtheorem{prop}[thm]{Proposition}
\newtheorem{lem}[thm]{Lemma}
\newtheorem{cor}[thm]{Corollary}
\theoremstyle{definition}
\newtheorem{defn}[thm]{Definition}
\newtheorem{qu}[thm]{Question}
\newtheorem{rmk}[thm]{Remark}
\newcommand{\bZ}{\mathbb{Z}}
\newcommand{\bN}{\mathbb{N}}
\newcommand{\bR}{\mathbb{R}}
\newcommand{\mc}[1]{\mathcal{#1}}
\newcommand{\pap}{p.a.p.\@\xspace}
\newcommand{\ppm}{p.p.m.\@\xspace}
\newcommand{\Homeo}{\mathrm{Homeo}}
\newcommand{\N}{\mathrm{N}}
\newcommand{\CC}{\mathrm{C}}
\newcommand{\inv}{^{-1}}
\newcommand{\supp}{\mathrm{supp}}
\newcommand{\id}{\mathrm{id}}
\newcommand{\per}{\mathrm{per}}
\newcommand{\aper}{\mathrm{aper}}
\newcommand{\defbold}{\textbf}
\newcommand{\triv}{\{1\}}
\newcommand{\grp}[1]{\langle #1 \rangle}
\newcommand{\ol}[1]{\overline{#1}}
\newcommand{\Fgp}[1]{\tau[#1]}
\newcommand{\Fgpp}[1]{\tau_{\mathbb{N}}[#1]}
\newcommand{\pipp}{\pi_{\mathbb{N}}}
\newcommand{\Fgpnn}[1]{\tau_{+}[#1]}
\begin{document}

\begin{abstract}
We show that given a compact minimal system $(X,g)$ and an element $h$ of the topological full group $\Fgp{g}$ of $g$, then the infinite orbits of $h$ admit a locally constant orientation with respect to the orbits of $g$.  We use this to obtain a clopen partition of $(X,G)$ into minimal and periodic parts, where $G$ is any virtually polycyclic subgroup of $\Fgp{g}$.  We also use the orientation of orbits to give a refinement of the index map and to describe the role in $\Fgp{g}$ of the submonoid generated by the induced transformations of $g$.  Finally, we consider the problem, given a homeomorphism $h$ of the Cantor space $X$, of determining whether or not there exists a minimal homeomorphism $g$ of $X$ such that $h \in \Fgp{g}$.
\end{abstract}

\maketitle

\section{Introduction}

Let $X$ be a Hausdorff topological space and let $\Homeo(X)$ be the group of homeomorphisms from $X$ to itself; write $\grp{g} := \{g^n \mid n \in \bZ\}$ for the group generated by $g \in \Homeo(X)$.  We say $g$ is \defbold{minimal} if $X$ is nonempty, and whenever $K$ is a proper closed subspace of $X$ such that $gK \subseteq K$, then $K = \emptyset$.  Given a subgroup $G$ of $\Homeo(X)$, the \defbold{topological full group} $\Fgp{G}$ of $G$ consists of all homeomorphisms $h$ of $X$ such that there is a locally constant function $c: X \rightarrow G$, called the \defbold{cocycle} of $h$ with respect to $g$, such that $hx = c(h)x$ for all $x \in X$.  In other words, $\Fgp{G}$ consists of those homeomorphisms that are `piecewise' in $G$ with clopen pieces.  More specifically, given $g \in \Homeo(X)$, then we write $\Fgp{g} := \Fgp{\grp{g}}$.  Note that if $g$ is aperiodic (that is, $g$ has no finite orbits), then the cocycle of $h$ with respect to $\grp{g}$ is unique: we write it as a locally constant function $c_{g,h}: X \rightarrow \bZ$ so that $hx = g^{c_{g,h}}x$ for all $x \in X$.

The topological full group was introduced several times independently in the 1980s, first by W. Krieger \cite{Krieger} in 1980, then by P. \v{S}t\v{e}p\'{a}nek and M. Rubin \cite{SR} in 1989, and also with a focus on analysing a Cantor minimal system (that is, the Cantor space equipped with a single homeomorphism acting minimally) by I. Putnam \cite{Putnam} in 1989.  The study of Cantor minimal systems in particular took off in the 1990s with further work of T. Giordano, Putnam and C. Skau \cite{GPS1}, \cite{GPS2}.  (See also \cite{Tom}.)  Topological full groups also turn out to have remarkable properties from a group-theoretic perspective, providing for instance the first known examples of infinite finitely generated simple amenable groups (see \cite{JM}).  Their theory has been developed and generalized by many different authors, for example to the setting of \'{e}tale groupoids; see \cite{Cor} and \cite{Mat} for surveys of some recent developments.

The definition of the topological full group makes sense for quite general classes of action or partial action by homeomorphisms.  However, for this article we focus on a structure that is particularly natural to define with respect to a single aperiodic homeomorphism $g$, namely the partial order generated by setting $x \le gx$ for all $x \in X$.  In this context we identify two types of `positive' element of $\Fgp{g}$ and derive consequences for the structure of general elements of $\Fgp{g}$.

\begin{defn}
Given a compact Hausdorff space $X$ and an aperiodic homeomorphism $g$ of $X$, define a partial order $\le_g$ on $X$ by setting $x \le_g y$ if $y = g^tx$ for some $t \ge 0$.  All orbits of homeomorphisms are two-sided unless otherwise specified.  Given $h \in \Fgp{g}$, we say an $h$-orbit $\Omega$ is \defbold{positive} (with respect to $g$) if for all $y,z \in \Omega$, there is $n \in \bN$ such that $h^{n'}y >_g z >_g h^{-n'}y$ for all $n' \ge n$; \defbold{strongly positive} (with respect to $g$) if for all $y \in \Omega$ we have $hy >_g y$, or in other words, $c_{g,h}(y) > 0$; and \defbold{trivial} if $|\Omega|=1$.  The orbit is \defbold{(strongly) negative} with respect to $g$ if it is (strongly) positive with respect to $g\inv$.  Say that $h$ is \defbold{(strongly) positive} (with respect to $g$) if its nontrivial orbits are all (strongly) positive with respect to $g$; write $\Fgpnn{g}$ for the set of positive elements of $\Fgp{g}$ with respect to $g$ and $\Fgpp{g}$ for the set of strongly positive elements with respect to $g$.

A \defbold{compact minimal system} $(X,g)$ consists of an infinite compact Hausdorff space $X$ equipped with a minimal (hence also aperiodic) homeomorphism $g$.
\end{defn}

The approach of looking at orientations of orbits was suggested to me by F. Le Ma\^{i}tre, who was in turn inspired by work of R. Belinskaya \cite{Bel}, both working in the measure-theoretic setting.  Several of the results in this article will mimic those of \cite[\S4.3]{LM} in particular.  The novelty in the topological dynamics setting is to establish that various sets of interest are actually clopen.

Our first main result is that given a compact minimal system $(X,g)$, every element of the topological full group can be naturally partitioned into a clopen positive, negative and periodic part.

\begin{thm}[See \S\ref{sec:cyclic-sign}]\label{thm:sign}
Let $(X,g)$ be a compact minimal system and let $h \in \Fgp{g}$.  Then $X$ admits a partition into $\grp{h}$-invariant clopen subspaces
\[
X = X_+ \sqcup X_- \sqcup X_\per,
\]
where $X_+$ is the union of positive orbits and $X_-$ is the union of negative orbits of $h$, and where $h$ has finite order on $X_\per$.
\end{thm}

We refer to the partition of the action of $h \in \Fgp{g}$ given by Theorem~\ref{thm:sign} as the \defbold{sign partition} of $h$ (with respect to $g$).  Closely related to the existence of the sign partition of $h$ is another kind of partition that represents an intrinsic property of the action of $\grp{h}$ on $X$ (that is, without any direct reference to $g$), and is analogous to a phenomenon observed by M. Keane (\cite{Keane}) in the context of interval exchange transformations.\footnote{Keane's results are not stated in exactly this form, however if one looks at the results in \cite[\S3]{Keane}, one deduces that the finite orbits of an arbitrary interval exchange transformation $T$ of the unit interval have bounded size, and that for $T$ aperiodic but not minimal, then $[0,1]$ can be decomposed into $T$-invariant parts that are each finite unions of intervals, such that after reordering, $T$ has fewer breakpoints on each part than on $[0,1]$ as a whole.  A minimal-periodic partition of $[0,1]$ then follows by induction, where the parts are finite unions of intervals.}In fact, the next theorem applies not just to cyclic subgroups, but all virtually polycyclic subgroups of $\Fgp{g}$.  (A group $G$ is \defbold{polycyclic} if $G$ admits a subnormal series with finitely many factors, each of which is cyclic; $G$ is \defbold{virtually polycyclic} if it has a polycyclic subgroup of finite index.)

\begin{defn}
Let $G$ be a group of homeomorphisms of a compact Hausdorff space $X$; write $X_{\per}$ for the union of the finite orbits of $G$ and $X_{\aper}$ for the union of the infinite orbits of $G$.  We say $G$ is \defbold{strongly periodic} if there are finitely many distinct subgroups $K_1,\dots,K_n$ of $G$ of finite index, such that for each $1 \le j \le n$ the set of points $X_\per(j)$ with stabilizer conjugate to $K_j$ is clopen and $X = \bigsqcup^n_{j=1}X_\per(j)$.  Equivalently, $G$ is strongly periodic if and only if it acts with kernel of finite index and for every $h \in G$ the set of fixed points of $h$ on $X$ is clopen.

More generally, we say $G$ admits a \defbold{minimal-periodic partition} if there is a partition of $X$ into clopen $G$-invariant subspaces
\[
X = \bigsqcup^m_{i=1}X_\aper(i) \sqcup X_\per,
\]
where $G$ acts minimally on each of the infinite subspaces $X_\aper(1), \dots, X_\aper(m)$ and has strongly periodic action on $X_\per$.  Note that if a minimal-periodic partition exists, it is unique up to reordering of the subspaces $X_\aper(i)$ and $X_\per(j)$.  We make the same definitions for an individual homeomorphism $h \in \Homeo(X)$ using $G = \grp{h}$.
\end{defn}

\begin{thm}[See \S\ref{sec:minimal-periodic}]\label{thm:minimal_periodic}
Let $X$ be an infinite compact Hausdorff space, let $g \in \Homeo(X)$ and let $G$ be a virtually polycyclic subgroup of $\Fgp{g}$.  If $g$ admits a minimal-periodic partition, then so does $G$.

Moreover, if $g$ admits a minimal-periodic partition, then for each closed $G$-minimal subspace $Y$ of $X$ there is a normal subgroup $N$ of $G$ and a partition of $Y$ into finitely many clopen $N$-minimal spaces $Y_1,\dots,Y_n$, such that, writing $N_i$ for the kernel of the action of $N$ on $Y_i$, then $N/N_i \cong \bZ$.
\end{thm}

In particular, any compact minimal system $(X,g)$ clearly admits a minimal-periodic partition, so Theorem~\ref{thm:minimal_periodic} says in this case that every virtually polycyclic $G \le \Fgp{g}$ admits a minimal-periodic partition.  In the case that $G = \grp{h}$, the minimal-periodic partition of $h$ refines the sign partition of $h$: each of the subspaces $X_\per(j)$ is contained in the periodic part $X_\per$, and each of the aperiodic minimal parts $X_\aper(i)$ of $h$ is contained in either $X_+$ or $X_-$.  There is also the following application to pointwise almost periodic (\pap) groups of homeomorphisms, or equivalently, groups $G$ of homeomorphisms such that each orbit closure is a compact minimal $G$-set (see Definition~\ref{def:pap} and Lemma~\ref{lem:pap_char}).

\begin{cor}\label{cor:pap}
Let $X$ be a locally compact Hausdorff space and let $f \in \Homeo(X)$.  If $\grp{f}$ is \pap on $X$, then so is every virtually polycyclic subgroup $G$ of $\Fgp{f}$.
\end{cor}

As the name suggests, every strongly positive element is positive, but not conversely.  The relationship between $\Fgpnn{g}$ and $\Fgpp{g}$ can be summarized as follows.

\begin{thm}[See \S\ref{sec:positive}]\label{thm:positive_conjugation}
Let $(X,g)$ be a compact minimal system.
\begin{enumerate}[(i)]
\item $\Fgpnn{g}$ is closed under conjugation in $\Fgp{g}$; moreover, given $h \in \N_{\Homeo(X)}(\Fgp{g})$, then $h\Fgpnn{g}h\inv \in \{\Fgpnn{g},(\Fgpnn{g})\inv\}$.
\item Given $h \in \Fgpnn{g}$, there is a unique $\Fgp{h}$-conjugate $h'$ of $h$ such that $h' \in \Fgpp{g}$.
\item We have $\bigcap_{h \in \Fgpp{g}}h\Fgpp{g}h\inv = \triv$.
\end{enumerate}
\end{thm}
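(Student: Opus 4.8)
The plan is to address the two parts separately. For part (i), the key observation is that the partial order $\le_g$ is reversed by passing to $g\inv$, and that positivity of a $\grp{h}$-orbit is phrased purely in terms of this order together with the $\grp{h}$-action. So if $k \in \Fgp{g}$ and $Y$ is a $\grp{h}$-orbit, then $kY$ is a $\grp{khk\inv}$-orbit; since $k$ is piecewise a power of $g$, for $y,z$ in a single $\grp{h}$-orbit the relation $h^{n'}y \ge_g z$ for all large $n'$ should be preserved after applying $k$, because on the (finitely many, by compactness) clopen pieces where $k$ agrees with a fixed power of $g$, the order relation $\le_g$ is shifted by a bounded amount. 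Thus $khk\inv$ is again positive, giving conjugation-invariance of $\Fgpnn{g}$ inside $\Fgp{g}$. For the normalizer statement, I would use that an element $h \in \N_{\Homeo(X)}(\Fgp{g})$ either preserves or reverses the orbit equivalence relation of $g$ together with its orientation; in the orientation-preserving case the same argument as above applies, and in the orientation-reversing case conjugation by $h$ swaps the roles of $g$ and $g\inv$, hence sends $\Fgpnn{g}$ to $(\Fgpnn{g})\inv$. The main technical point here is to make precise, using compactness, the claim that a topological full group element changes $\le_g$-distances only boundedly; this should follow from the fact that $k$ agrees locally with powers $g^{n_1},\dots,g^{n_r}$ and a standard argument bounding the cocycle $c_{g,k}$.

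For part (ii), I would start from a positive element $h \in \Fgpnn{g}$ and seek to conjugate it, within its own topological full group $\Fgp{h}$, into strongly positive form, i.e.\ so that $h'y \ge_g y$ for every $y$. The natural approach is to find a clopen function: on each $\grp{h}$-orbit $Y$, positivity gives that the $\grp{g}$-displacement under $h$ is eventually positive in both directions, but individual steps $hy$ need not satisfy $hy \ge_g y$. The idea is to replace $h$ by $h' = \phi h \phi\inv$ where $\phi \in \Fgp{h}$ is built from a cocycle that ``unwinds'' the finitely many places on each orbit where $h$ goes backwards. Concretely, I would look for an integer-valued locally constant function that on each orbit records a correction, and define $\phi$ on each orbit as the appropriate power of $h$; the positivity hypothesis guarantees that such a correction exists and is bounded on each orbit (the total backward displacement is finite), and a compactness argument should make it clopen. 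One then checks $h' = \phi h \phi\inv$ satisfies $h'y \ge_g y$ for all $y$.

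Uniqueness is the part I expect to be the real obstacle, or at least the most delicate point. Suppose $h', h'' \in \Fgpp{g}$ are both $\Fgp{h}$-conjugate to $h$, say $h'' = \psi h' \psi\inv$ with $\psi \in \Fgp{h'} = \Fgp{h}$. On a single free minimal $\grp{h}$-orbit, strong positivity forces $h'$ and $h''$ to be the ``first return'' map to a clopen transversal in a rigid way, so $\psi$ must act trivially on orbit-structure; the subtlety is that $\psi$ could a priori shift each orbit by a nonconstant amount. I would argue that strong positivity of both $h'$ and $h''$, combined with the fact that $\psi$ conjugates one to the other, forces the shift function to be both bounded and ``order-monotone'' on each orbit, hence constant, hence $\psi$ acts as a (locally constant) power of $h$ that commutes with $h$, so $h'' = h'$. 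Making this rigorous will likely use the orbit-counting invariants $o^\pm$ and the index map from the previous theorem to rule out any genuine discrepancy, together with minimality of $g$ to propagate a local coincidence of $h'$ and $h''$ to a global one. The existence half should be routine once the correction cocycle is set up; the uniqueness half is where I would spend most of the effort.
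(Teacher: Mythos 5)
Your part (i) is essentially the paper's argument and is fine: for $k \in \Fgp{g}$ the cocycle identity $c_{g,(khk\inv)^{n}}(kx) = c_{g,k}(h^{n}x) + c_{g,h^{n}}(x) - c_{g,k}(x)$ shows that the divergence of $n \mapsto c_{g,h^n}(x)$ is perturbed by at most $2|k|_g$, so conjugation preserves positivity of orbits. For the normalizer statement you assert, rather than derive, the dichotomy that $b \in \N_{\Homeo(X)}(\Fgp{g})$ preserves or reverses the orientation; the missing step is to apply the sign partition (Theorem~\ref{thm:sign}) to the minimal element $bgb\inv \in \Fgp{g}$, which is positive or negative but not both, and then translate between the cocycles of $g$ and of $bgb\inv$. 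That is a gap in rigor but not in strategy.

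The genuine gap is in the existence half of part (ii). Your conjugator is built to ``unwind the finitely many places on each orbit where $h$ goes backwards,'' justified by the claim that ``the total backward displacement is finite'' on each orbit. This is false: in Example~\ref{ex:different_positive}(i) the element $h$ acts as $g^3$ on $0X$ and as $g\inv$ on $1X$, and by minimality every infinite orbit enters $1X$ infinitely often, so $h$ steps backwards infinitely often and the total backward displacement on every orbit is infinite. The quantity that is actually bounded is the deviation $\delta(x,t) = c_{g,h^t}(x) - c_{g,(h')^t}(x)$ between the trajectory of $h$ and that of the sorted map $h' = \pipp(h)$ (which sends $x$ to the $\le_g$-least point of $\grp{h}x$ strictly above $x$); establishing that $h'$ is even continuous, and that $\delta$ is uniformly bounded, is the technical heart of the proof and requires the clopen strongly positive domains of $h$ and $h\inv$ (Lemma~\ref{lem:strongly_positive_nbd}), after which the conjugator is defined from the clopen set of points where $t \mapsto \delta(\cdot,t)$ attains its minimum at $t=0$. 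None of this is recoverable from the false finiteness claim. Uniqueness, by contrast, is much easier than you fear: any $\psi \in \Fgp{h}$ preserves each $\grp{h}$-orbit setwise, so $\psi h \psi\inv$ has the same orbits as $h$, and a strongly positive bijection generating a transitive $\bZ$-action on a fixed infinite orbit must be the $\le_g$-successor map on that orbit; hence any two strongly positive $\Fgp{h}$-conjugates of $h$ agree pointwise. There is no need to analyze the conjugator $\psi$, and your proposed appeal to the index map and the orbit numbers would be circular in this development, since those are constructed \emph{after}, and using, this proposition.
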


Given a compact minimal system $(X,g)$, it is well-known that there is a unique group homomorphism $I: \Fgp{g} \rightarrow \bZ$ such that $I(g)=1$, called the index map.  This map was implicitly used by \v{S}t\v{e}p\'{a}nek and Rubin (see the proof of Lemma~5.17 in \cite{SR}); an equivalent definition is also given more explicitly in \cite[\S3.3]{Cor}.  The index map was rediscovered in \cite{GPS2} using an integral formula.

Continuing the theme of orientation of orbits, given $h \in \Fgp{g}$ we can split $I(h)$ into a positive part and a negative part, which count respectively the number of positive and negative $h$-orbits inside a $g$-orbit.

\begin{thm}[See Propositions~\ref{prop:orbit_number} and~\ref{prop:index_sum}]\label{thm:index}
Let $(X,g)$ be a compact minimal system and let $h \in \Fgp{g}$.  Then there are nonnegative integers $o^+(h)$ and $o^-(h)$, such that every $g$-orbit contains exactly $o^+(h)$ positive $h$-orbits and $o^-(h)$ negative $h$-orbits.  Moreover, the index map is given by
\[
I(h) = o^+(h) - o^-(h).
\]
\end{thm}

Write $\mc{CO}^*(X)$ for the set of nonempty compact open subsets of $X$.  Given $A \in \mc{CO}^*(X)$, the \defbold{induced transformation} $g_A \in \Homeo(A)$ is defined by setting, for each $x \in A$, the image $g_Ax$ to be $g^tx$, where $t$ is the least positive integer such that $g^tx \in A$.  We also regard $g_A$ as a homeomorphism of $X$ by setting $g_Ax = x$ for all $x \in X \smallsetminus A$.   Notice that $I(g_A) = 1$ while $I(h) > 0$ for all $h \in \Fgpp{g} \smallsetminus \{1\}$, so $g_A$ cannot be decomposed as a product of $\ge 2$ nontrivial elements of $\Fgpp{g}$.  On the other hand the induced transformations form a generating set for both $\Fgpp{g}$ (as a monoid) and $\Fgp{g}$ (as a group), with respect to which we can put elements into a normal form.

\begin{thm}[See \S\ref{sec:normal_form:induced}]\label{thm:normal_form}
Let $(X,g)$ be a compact minimal system.  Let $h \in \Fgp{g}$.  Then $h$ can be expressed as follows in exactly one way:
\[
h =  g_{A_n} \dots g_{A_2} g_{A_1} g^r \text{ such that } r \in \bZ, \; A_i \in \mc{CO}^*(X) \; \text{ and } A_n \subseteq \dots \subseteq A_2 \subseteq A_1 \subsetneq X.
\]
Moreover, $h \in \Fgpp{g}$ if and only if $r \ge 0$.
\end{thm}

Under the hypotheses of Theorem~\ref{thm:normal_form}, if $X$ is zero-dimensional, it can be deduced from Rubin's spatial reconstruction theorem \cite[Theorem~2.14]{Rubin} that the group $\Fgp{g}$ is a complete invariant for the flip conjugacy class of $(X,g)$; this was done more explicitly for Cantor minimal systems in \cite{GPS2}.  Analogously, the monoid $\Fgpp{g}$ is a complete invariant for the conjugacy class of $(X,g)$, and it is relatively easy to see how the space $X$ manifests in the monoid structure (see Proposition~\ref{prop:positive:reconstruction}).  Theorem~\ref{thm:normal_form} also yields a presentation of $\Fgpp{g}$, which is equivalent to specifying a certain binary operation on the set of nonempty compact open subsets of $X$ (see Proposition~\ref{prop:monoid_presentation}).

\

Say that a homeomorphism $h$ of a compact Hausdorff space $X$ is \defbold{piecewise a power of a minimal homeomorphism} (\ppm) if $X = \emptyset$ or there exists $g \in \Homeo(X)$ such that $g$ is minimal on $X$ and $h \in \Fgp{g}$.  As we see from Theorem~\ref{thm:minimal_periodic}, \ppm homeomorphisms have a special form, namely they admit a minimal-periodic partition.  Now suppose that we are given some homeomorphism $h$ of a compact Hausdorff space that admits a minimal-periodic partition.    Then it is natural to ask, what conditions on $h$ are necessary or sufficient to construct $g \in \Homeo(X)$ such that $h \in \Fgp{g}$?  We obtain a partial answer to this question.

Observe that if $h$ is \ppm, then so are all its induced transformations on nonempty clopen subspaces: specifically, if $h \in \Fgp{g}$ where $g$ is minimal, and $A$ is a nonempty clopen subspace, then $h_A \in \Fgp{g_A}$.  Conversely, on the Cantor space $X$ (or more generally if $X$ is a minimal h-homogeneous space, see Section~\ref{sec:ppm}), then $(X,h)$ is \ppm if and only if $h$ admits a minimal-periodic partition and either $X_\aper$ is empty, or $(X_\aper,h)$ is \ppm (see {Proposition~\ref{prop:remove_periodic}).  Thus for the Cantor space, the problem of characterizing when the homeomorphism $h$ is \ppm reduces to the case that $h$ is aperiodic.

Given a \ppm homeomorphism $h \in \Homeo(X)$, and given a minimal homeomorphism $g$ such that $h \in \Fgp{g}$, we can take the orbit number $o_g(h) := o^+_g(h) + o^-_g(h)$ as a measure of the `efficiency' with which $g$ witnesses that $h$ is p.p.m.  Write $o_{\min}(h)$ for the least value of $o_g(h)$, as $g$ ranges over all minimal homeomorphisms of $X$ such that $h \in \Fgp{g}$.  It is easily seen that $o_{\min}(h)$ is at least the number $m(h)$ of infinite minimal orbit closures of $h$; say that $h$ is \defbold{strongly \ppm} if $o_{\min}(h) = m(h)$.  We can characterize the aperiodic strongly \ppm homeomorphisms using a notion of equivalence between the infinite minimal orbit closures.  We say two compact minimal systems $(X_1,h_1)$ and $(X_2,h_2)$ are \defbold{(flip) Kakutani equivalent} if there are nonempty clopen subspaces $Y_i \subseteq X_i$ such that the induced systems $(Y_1,(h_1)_{Y_1})$ and $(Y_2,(h_2)_{Y_2})$ are (flip) conjugate.  The concept of Kakutani equivalence was introduced in the ergodic setting by S. Kakutani, then translated to topological dynamics by later authors (see \cite{Pet}).

\begin{thm}[{See Proposition~\ref{prop:Kakutani_weld:bis}}]
Let $X$ be a compact Hausdorff space and let $h \in \Homeo(X)$ be aperiodic.  Then the following are equivalent:
\begin{enumerate}[(i)]
\item $h$ is strongly \ppm;
\item There is a partition of $X$ into clopen spaces $X_1,\dots,X_m$ such that $(X_1,h),\dots,(X_m,h)$ are compact minimal systems that lie in a single flip Kakutani equivalence class.
\end{enumerate}
\end{thm}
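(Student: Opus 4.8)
The plan is to prove $(i) \Leftrightarrow (ii)$ by identifying the minimal components of $h$ with induced transformations of a witnessing minimal homeomorphism. I will use throughout that a minimal homeomorphism of an infinite compact Hausdorff space is free, that the first-return time to a nonempty clopen set is a bounded locally constant function (so that Kakutani--Rokhlin towers over clopen sets are available), and that flip Kakutani equivalence is an equivalence relation on compact minimal systems; the only point needing a word is transitivity, which reduces to the fact that any two induced transformations $g_A,g_B$ of a minimal system $(X,g)$ have a common induced transformation (choose $n$ with $g^n A \cap B \ne \emptyset$ and a clopen $D \subseteq g^n A \cap B$; then $g_D$ is an induced transformation of both $g_B$ and $g_{g^n A}\cong g_A$, the latter conjugacy being via $g^n$).

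$(i)\Rightarrow(ii)$. Suppose $h$ is strongly \ppm and fix a minimal $g$ with $h \in \Fgp{g}$ and $o_g(h) = m(h)$. By Theorem~\ref{thm:minimal_periodic} and aperiodicity, $X = X_1 \sqcup \dots \sqcup X_m$ is the clopen partition into the free minimal components of $h$, with $m = m(h)$. Since $g$ is minimal and $h \in \Fgp{g}$ preserves $\grp{g}$-orbits, every $\grp{g}$-orbit $O$ meets every $X_i$, and the assignment of each $\grp{h}$-orbit in $O$ to the component containing it is a surjection onto $\{X_1,\dots,X_m\}$; as $O$ contains exactly $o^+(h)+o^-(h)=o_g(h)=m$ infinite $\grp{h}$-orbits (Proposition~\ref{prop:orbit_number}), this surjection is a bijection, i.e.\ $O \cap X_i$ is a single $\grp{h}$-orbit for every $O$ and $i$. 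Hence $h\rest_{X_i}$ and the induced transformation $g_{X_i}$ (minimal on $X_i$) have the same orbits, so $h\rest_{X_i}$ has exactly one orbit per $\grp{g_{X_i}}$-orbit. By Theorem~\ref{thm:sign} each $X_i$ lies in $X_+$ or in $X_-$; assume $X_i \subseteq X_+$ (the case $X_i\subseteq X_-$ being the same with $g$ replaced by $g^{-1}$, using $(g^{-1})_{X_i}=(g_{X_i})^{-1}$). Then $h\rest_{X_i}\in\Fgpnn{g_{X_i}}$, so by Proposition~\ref{prop:strongly_positive_form} there is $\gamma\in\Fgp{h\rest_{X_i}}$ with $h_i' := \gamma(h\rest_{X_i})\gamma^{-1}\in\Fgpp{g_{X_i}}$. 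Now $\gamma\in\Fgp{h\rest_{X_i}}\subseteq\Fgp{g_{X_i}}$ preserves $\grp{g_{X_i}}$-orbits, so $h_i'$ too has one orbit per $\grp{g_{X_i}}$-orbit; being strongly positive it has no negative orbits, so by Theorem~\ref{thm:index} its index relative to $g_{X_i}$ is $o^+(h_i')-o^-(h_i')=1$. By Theorem~\ref{thm:normal_form}(i), $h_i'$ is a product of $k\ge1$ induced transformations of $g_{X_i}$, each of index $1$ by Theorem~\ref{thm:index}; as indices add, $k=1$, so $h_i'=(g_{X_i})_A$ for a nonempty clopen $A\subseteq X_i$, and minimality of $h_i'$ forces $A=X_i$, whence $h_i'=g_{X_i}$. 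Thus $(X_i,h)$ is conjugate to $(X_i,g_{X_i})$, in particular flip conjugate to it; since $(X_1,g_{X_1}),\dots,(X_m,g_{X_m})$ are induced transformations of the single minimal system $(X,g)$ they are pairwise Kakutani equivalent, and therefore all of $(X_1,h),\dots,(X_m,h)$ lie in one flip Kakutani equivalence class.

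$(ii)\Rightarrow(i)$. Given a partition as in (ii), I will build a minimal $g\in\Homeo(X)$ with $h\in\Fgp{g}$ and $o_g(h)=m=m(h)$; this gives $o_{\min}(h)\le m(h)$, and as $o_{\min}(h)\ge m(h)$ always, $h$ is strongly \ppm. Since the $(X_i,h)$ lie in one flip Kakutani equivalence class, build (inductively, using that common induced transformations can be induced further) a single compact minimal system $(W,w)$, nonempty clopen $Y_i\subseteq X_i$, and homeomorphisms $\phi_i\colon Y_i\to W$ with $\phi_i(h\rest_{X_i})_{Y_i}\phi_i^{-1}=w^{\varepsilon_i}$ for some $\varepsilon_i\in\{\pm1\}$. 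Viewing $X_i$ as a Kakutani--Rokhlin tower over $Y_i$ with base map $(h\rest_{X_i})_{Y_i}$, define $g$ to agree with $h^{\varepsilon_i}$ on $X_i$ off its top level, and on the top level to cross to the base $Y_{i+1}$ of the next tower by $y\mapsto\phi_{i+1}^{-1}\phi_i(y)$ for $i<m$ and $y\mapsto\phi_1^{-1}w\phi_m(y)$ for $i=m$ (with the evident modification, descending rather than climbing, when $\varepsilon_i=-1$). Then $g\in\Homeo(X)$; a $\grp{g}$-orbit climbs the towers $X_1,\dots,X_m$ cyclically while its base point in $Y_1$ advances by $\phi_1^{-1}w\phi_1$, which is minimal since $w$ is, so every $\grp{g}$-orbit is dense and $g$ is minimal. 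Off the finitely many ``top level'' clopen pieces $g=h^{\pm1}$, and on each such piece $h$ is obtained from $g$ by a locally constant power (go once around the cycle back into $X_i$), so $h\in\Fgp{g}$; and each $\grp{g}$-orbit meets $X_i$ in the union of the $X_i$-columns over one $(h\rest_{X_i})_{Y_i}$-orbit, hence in a single $\grp{h}$-orbit, so $o_g(h)=m$.

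The step I expect to be the main obstacle is guaranteeing that the homeomorphism $g$ built in $(ii)\Rightarrow(i)$ is genuinely minimal: a naive cyclic gluing of the given flip conjugacies need not be, since the resulting base map can be a nontrivial element of the centralizer of a minimal system rather than a minimal map, which is precisely why the construction is routed through an auxiliary common base system $(W,w)$. On the $(i)\Rightarrow(ii)$ side, the delicate point is the index computation that forces the strongly positive conjugate $h_i'$ to be a single induced transformation; this is what pins $h\rest_{X_i}$ to $g_{X_i}^{\pm1}$ up to conjugacy, with the sign, and hence the word ``flip'' in flip Kakutani equivalence, exactly accounting for the sign partition of $h$.
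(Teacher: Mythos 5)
Your proof is correct, and while its skeleton matches the paper's (count orbits for (i)$\Rightarrow$(ii); glue Kakutani-equivalent systems for (ii)$\Rightarrow$(i)), both halves are executed by a genuinely different route. For (i)$\Rightarrow$(ii), the paper uses the same counting argument ($m=o_g(h)=\sum_i o_g(h\rest_{X_i})$ with each summand at least $1$) but then simply invokes Corollary~\ref{cor:one_orbit} to get a clopen $A$ with $h_A\in\{g_A,g\inv_A\}$, which already yields flip Kakutani equivalence of $(X_i,h)$ with $(X,g)$; you instead re-derive the orbit-number-one characterization from scratch via the sign partition, $\pipp$, and the index map, obtaining the stronger conclusion that $(X_i,h)$ is actually \emph{conjugate} to $(X_i,g_{X_i}^{\pm 1})$ --- in effect Proposition~\ref{prop:perturbation} applied to $(X_i,g_{X_i})$. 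For (ii)$\Rightarrow$(i), the paper flips signs so that the $(X_i,h_i)$ become honestly Kakutani equivalent and then cites Proposition~\ref{prop:Kakutani_weld}, whose pairwise-welding construction makes $g_{X_i}=h_i$ on the nose, so $h\in\Fgp{g}$ and $o_g(h)=m$ are immediate; you build $g$ in one pass as a cyclic arrangement of Kakutani--Rokhlin towers over a common base $(W,w)$. That works, but it carries two burdens the paper's route avoids: (a) you need a \emph{simultaneous} common induced system for all $m$ components, which requires iterating the refinement step from the transitivity proof of Lemma~\ref{lem:Kakutani_equivalence} (you gesture at this but it deserves a line); and (b) the crossing maps must be arranged so that exactly one application of $w$ occurs per circuit of the towers --- as literally written, if $g$ sends the top point of the column over $y\in Y_i$ to $\phi_{i+1}\inv\phi_i$ of the \emph{return point} $u_iy'\in Y_i$ rather than of the base point $y$ itself, the first-return map to $Y_1$ comes out as $\phi_1\inv w^{m+1}\phi_1$, which need not be minimal (e.g.\ for an odometer). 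Your stated outcome ($\phi_1\inv w\phi_1$) is the right one and is achievable with the base-point-to-base-point convention, but the formulas should be pinned down to make that unambiguous.
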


We can thus restate the \ppm property for the Cantor space as follows.  Say that the tuple $(X_i,h_i)_{1 \le i \le m}$ of compact minimal systems is \defbold{Kakutani compatible} if there exists a family of compact minimal systems $(X_1,g_1),\dots, (X_m,g_m)$, all lying in a single Kakutani equivalence class, such that $h_i \in \Fgp{g_i}$.

\begin{cor}
Let $X$ be the Cantor space and let $h \in \Homeo(X)$.  Then the following are equivalent:
\begin{enumerate}[(i)]
\item $h$ is \ppm;
\item There is a partition of $X$ into clopen spaces $X_0,X_1,\dots,X_m$ such that $h$ is strongly periodic on $X_0$ and $(X_1,h),\dots,(X_m,h)$ are compact minimal systems such that $(X_i,h_i)_{1 \le i \le m}$ is a Kakutani compatible tuple.
\end{enumerate}
\end{cor}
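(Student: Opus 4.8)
The plan is to derive the corollary from Proposition~\ref{prop:Kakutani_weld:bis} together with Theorem~\ref{thm:minimal_periodic} and the remarks on induced transformations recorded just before the statement. Throughout, for a clopen $h$-invariant set $Y \subseteq X$ I write $h_Y := h\rest_Y$, which is also the induced transformation of $h$ on $Y$; and for a homeomorphism $k$ of $Y$ I write $\tilde k$ for the homeomorphism of $X$ agreeing with $k$ on $Y$ and with $\id_X$ off $Y$. I will use two elementary facts: if $k \in \Fgp{g}$ then $\Fgp{k} \subseteq \Fgp{g}$ (a piecewise power of a piecewise power of $g$ is a piecewise power of $g$); and if $A \subseteq X$ is nonempty clopen then $\widetilde{g_A} \in \Fgp{g}$.

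For (i) $\Rightarrow$ (ii): suppose $h \in \Fgp{g}$ with $g$ a minimal homeomorphism of $X$. The system $(X,g)$ trivially admits a minimal-periodic partition, so by Theorem~\ref{thm:minimal_periodic} so does $h$; as $h$ is aperiodic the periodic part is empty, giving a clopen partition $X = X_1 \sqcup \dots \sqcup X_m$ with each $X_i$ $h$-invariant and each $(X_i, h_{X_i})$ a compact minimal system. Set $g_i := g_{X_i}$; this is a minimal homeomorphism of $X_i$, and $h_{X_i} \in \Fgp{g_i}$ by the observation recalled before the statement. Moreover each $(X_i,g_i)$ is Kakutani equivalent to $(X,g)$ — take $Y_1 = X_i \subseteq X_i$ and $Y_2 = X_i \subseteq X$, using $(g_{X_i})_{X_i} = g_{X_i}$ — so by transitivity of Kakutani equivalence all the $(X_i,g_i)$ lie in a single Kakutani equivalence class. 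Hence $((X_i,h_{X_i}))_{1 \le i \le m}$ is Kakutani compatible.

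For (ii) $\Rightarrow$ (i): suppose $X = X_1 \sqcup \dots \sqcup X_m$ is a clopen partition with each $(X_i, h_{X_i})$ a compact minimal system, and $g_1,\dots,g_m$ are minimal homeomorphisms of $X_1,\dots,X_m$ lying in one Kakutani equivalence class with $h_{X_i} \in \Fgp{g_i}$. Applying the welding construction from the proof of Proposition~\ref{prop:Kakutani_weld:bis} to the mutually Kakutani equivalent systems $(X_1,g_1),\dots,(X_m,g_m)$ yields a minimal homeomorphism $g$ of $X$ such that $\tilde g_i \in \Fgp{g}$ for every $i$ (the construction arranges the induced transformation of $g$ on $X_i$ to be $g_i$, up to replacing some $g_i$ by $g_i\inv$). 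Since $X_i$ is $g_i$-invariant, $h_{X_i} \in \Fgp{g_i}$ gives $\widetilde{h_{X_i}} \in \Fgp{\tilde g_i} \subseteq \Fgp{g}$, and since $h = \widetilde{h_{X_1}} \cdots \widetilde{h_{X_m}}$ with the factors of disjoint clopen support, $h \in \Fgp{g}$; thus $h$ is \ppm.

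The only substantive point is the welding step: one must check that mutually Kakutani equivalent compact minimal systems amalgamate into a single minimal system on the disjoint union in which each summand reappears (up to a flip) as an induced transformation of the whole. This is exactly the mechanism already used to prove Proposition~\ref{prop:Kakutani_weld:bis}, so it can be invoked directly; the one discrepancy is that there the relevant relation is \emph{flip} Kakutani equivalence, but replacing a generator $g_i$ by $g_i\inv$ affects neither the minimality of the welded $g$ nor the containments $\widetilde{h_{X_i}} \in \Fgp{\tilde g_i} \subseteq \Fgp{g}$ in $\Homeo(X)$, so ordinary Kakutani compatibility suffices. The remaining ingredients — emptiness of the periodic part of an aperiodic $h$, minimality of induced transformations of a minimal system, $(X_i,g_i)\sim(X,g)$, and transitivity of Kakutani equivalence (implicit in the notion of a Kakutani equivalence class) — are routine or already available.
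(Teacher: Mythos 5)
Your proof is correct and follows the route the paper intends: Theorem~\ref{thm:minimal_periodic} plus $h_A \in \Fgp{g_A}$ for one direction, and the welding of Kakutani equivalent systems for the other. One small note: the welding statement you need is Proposition~\ref{prop:Kakutani_weld} (not \ref{prop:Kakutani_weld:bis}), and since Kakutani compatibility is defined via plain (not flip) Kakutani equivalence, that proposition applies verbatim and your discussion of flips is unnecessary.
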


\subsection*{Open questions}

Theorem~\ref{thm:sign} ensures in particular that for $X$ a compact Hausdorff space, if $g \in \Homeo(X)$ is minimal, then for all $h \in \Fgp{g}$, the set $X^{\grp{h}}_{\aper}$ is clopen and in the action of $h$ on $X^{\grp{h}}_{\aper}$, the orientation of $h$-orbits relative to $g$ is locally constant.  Some of this structure is present for $h \in \Fgp{g}$ if we only assume $g \in \Homeo(X)$ aperiodic: see Lemma~\ref{lem:aperiodic_drift} below.  In this context we cannot expect a minimal-periodic partition for $h$, but it is natural to ask under what conditions $h$ admits (a weak form of) the sign partition with respect to $g$.

\begin{qu}\label{que:aperiodic}
Let $X$ be a compact Hausdorff space.  For which aperiodic $g \in \Homeo(X)$ do we have either or both of the following?
\begin{enumerate}[(a)]
\item For all $h \in \Fgp{g}$, $X^{\grp{h}}_{\aper}$ is open.  (Equivalently, the finite $h$-orbits have bounded length.)
\item For all $h \in \Fgp{g}$, then $X^{\grp{h}}_{\aper}$ can be partitioned into compact sets $X_+$ and $X_-$, such that $h$ is positive on $X_+$ and negative on $X_-$.  (Equivalently: for all $h \in \Fgp{g}$, the set of points $x$ such that $c_{g,h^n}(x)$ tends to $+\infty$ as $n$ tends to $+\infty$ is clopen in $X^{\grp{h}}_{\aper}$.)
\end{enumerate}
\end{qu}

In Theorem~\ref{thm:minimal_periodic}, the assumption that $G$ is virtually polycyclic was needed for the specific approach of the proof, but it is interesting to ask whether such minimal-periodic partitions exist more generally for finitely generated subgroups of $\Fgp{g}$, in the case that $g$ admits a minimal-periodic partition.  (Finite generation at least ensures that the set of fixed points of $G$ is clopen, see Lemma~\ref{lem:periodic}; without this condition, there are certainly subgroups without a minimal-periodic partition, such as a point stabilizer in $\Fgp{g}$).  Certainly, in this situation $G$ has a bounded number of infinite orbits on each $g$-orbit (Lemma~\ref{lem:infinite_orbit_bound}).  Note that not all finitely generated groups can occur in this context: by \cite[Theorem~A]{JM}, $G \le \Fgp{g}$ must be amenable.

\begin{qu}
Which finitely generated groups $G$ have the following property?

Let $(X,g)$ be a compact minimal system and let $\pi: G \rightarrow \Fgp{g}$ be a homomorphism.  Then $(X,\pi(G))$ admits a minimal-periodic partition.
\end{qu}

One can also ask for a version of a minimal-periodic partition in the context of minimal one-parameter flows, that is, we have a compact Hausdorff space $X$ and a continuous homomorphism $g: \bR \rightarrow \Homeo(X)$, such that $\{g(r).x \mid r \in \bR\}$ is dense for every $x \in X$.  Note that in this case $X$ is connected, so we have to replace clopen partitions with some generalization.  Suppose $(X,g)$ is a minimal one-parameter flow.  One way to define a `piecewise group' $\Fgp{g}$ of $(X,g)$ in this context would be the group of bijections $h$ of $X$, such that there is a partition $P = \{X_1,\dots,X_n\}$ of $X$ into finitely many pieces and $r_1,\dots,r_n \in \bR$, such that $hx = g(r_i)x$ for all $x \in X_i$, and such that the union of the interiors of the sets $X_i$ is dense in $X$.  (The group of interval exchange transformations is a special case of this, where one takes $X$ to be the circle group and $g(r) = (e^{2\pi i t} \mapsto e^{2\pi i(t+r)})$.)

\begin{qu}
Let $(X,g)$ be a minimal one-parameter flow and let $h \in \Fgp{g}$.  Are there disjoint $\grp{h}$-invariant open subsets $X_0,X_1,\dots,X_m$ of $X$, with dense union, such that $h$ has finite order on $X_0$ and acts minimally on $X_i$ for $1 \le i \le m$?
\end{qu}

The Kakutani equivalence classes of minimal homeomorphisms of the Cantor space were classified by \cite[Theorem~4.7]{HPS} and \cite[Theorem~3.8]{GPS1}: they are given by commensurability classes of simple ordered Bratteli diagrams, where two ordered Bratteli diagrams are commensurable if one can be obtained from the other by telescoping and changing finitely many edges.  What is not clear is how to describe, given a Cantor minimal system $(X,h)$, the ordered Bratteli diagrams of those systems $(X,g)$ for which $h \in \Fgp{g}$ in terms of that of $(X,h)$.  In particular, it is not clear if Kakutani compatibility reduces to an equivalence relation defined on pairs of spaces.

\begin{qu}
Let $(X_1,h_1),\dots,(X_m,h_m)$ be Cantor minimal systems such that for all ${1 \le i \le m-1}$ the pair $((X_i,h_i),(X_{i+1},h_{i+1}))$ is Kakutani compatible.  Is $(X_i,h_i)_{1 \le i \le m}$ Kakutani compatible?
\end{qu}

\subsection*{Structure of the article} After a preliminary section (\S\ref{sec:prelim}), we establish the existence of the sign partition and the minimal-periodic partition (\S\ref{sec:sign}).  We then establish the key properties that relate the set of positive elements to the set of strongly positive elements (\S\ref{sec:positive}).  In \S\ref{sec:index} we introduce orbit numbers and relate them to the index map.  In the next section (\S\ref{sec:normal_form:induced}) we prove the normal form for elements of $\Fgp{g}$ in terms of induced transformations, leading to a presentation for $\Fgpp{g}$; for the latter we also obtain an easy reconstruction of the space from the monoid.  The final section (\S\ref{sec:ppm}) gives partial results on the problem of determining whether a given homeomorphism is piecewise a power of a minimal homeomorphism.

\subsection*{Acknowledgements}  Research funded by ARC project FL170100032.  I thank Yves Cornulier and Fran\c{c}ois Le Ma\^{i}tre for very helpful comments and suggestions.  I also thank the anonymous referee, who has suggested a number of improvements to the article.

\section{Preliminaries}\label{sec:prelim}

\subsection{Notation}

In this section we set some notation and recall some standard concepts that will be used throughout.

For this article, $0 \in \bN$.

Given a function $\alpha: X \rightarrow Y$, and $x \in X$, we will simply write $\alpha x$ to mean $\alpha(x)$, where there is no danger of confusion.  Since composition of functions is associative, we can similarly write $\alpha_n \alpha_{n-1} \dots \alpha_1 x$ to mean that the sequence $\alpha_1,\dots,\alpha_n$ of functions is applied successively to $x$.  Given a subset $K$ of $X$, we define $\alpha K := \{\alpha x \mid x \in K\}$, and given a set of $S$ of functions defined at a point $x$, we define $Sx := \{sx \mid s \in S\}$.

Given a topological space $X$, we write $\mc{CO}(X)$ for the set of compact open subsets of $X$ and $\mc{CO}^*(X) = \mc{CO}(X) \smallsetminus \{\emptyset\}$.  Note that if $X$ is compact Hausdorff, then $\mc{CO}(X)$ is just the set of clopen subsets of $X$.  We say that a locally compact Hausdorff space $X$ is \defbold{zero-dimensional} if $\mc{CO}(X)$ is a base for the topology of $X$.

By a \defbold{homeomorphism of $X$} we mean a homeomorphism from $X$ to itself.  We say a homeomorphism or group of homeomorphisms is \defbold{aperiodic} if it has no finite orbits.  Given a group of homeomorphisms $G$ of the compact Hausdorff space $X$, write $X^G_{\per}$ for the union of all finite $G$-orbits on $X$ and $X^G_{\aper}$ for the union of all infinite $G$-orbits on $X$.  We will drop the superscript when the acting group is clear from context.

\subsection{The cocycle}\label{sec:cocycle}

Let $X$ be an infinite compact Hausdorff space, let $g$ be an aperiodic homeomorphism of $X$ and let $h \in \Fgp{g}$.  Then for each $x \in X$ there is exactly one $t \in \bZ$ such that $hx = g^tx$.  This defines a continuous map, the \defbold{cocycle of $h$ with respect to $g$}, which is described as follows:
\[
c_{g,h}: X \rightarrow \bZ; \quad \forall x \in X: hx = g^{c_{g,h}(x)}x.
\]

Since the cocycle is a continuous map from a compact space to a discrete space, it takes only finitely many values.  We define $|h|_g := \max \{ |c_{g,h}(x)| \mid x \in X\}$.  We also define a partial order $\le_g$ on $X$: say $x \le_g y$ if $y = g^tx$ for some $t \ge 0$.

Given a finitely generated $G \le \Fgp{g}$, if the stabilizer of $x \in X$ has finite index, then it depends continuously on $x \in X$.

\begin{lem}\label{lem:periodic}
Let $X$ be a compact Hausdorff space and let $g$ be an aperiodic homeomorphism of $X$.  Let $G$ be a finitely generated subgroup of $\Fgp{g}$ and let $H$ be a finite index subgroup of $G$.  Then the set $Y := \{x \in X \mid G_x = H\}$ is clopen in $X$.  In particular, $X^G_{\aper}$ is closed, and if $X^G_\per$ is closed then $(X^G_\per,G)$ is strongly periodic.
\end{lem}

\begin{proof}
Let $S$ be a finite generating set for $H$ (which exists since $H$ has finite index in the finitely generated group $G$), and let $F$ be a set of representatives for the left cosets of $H$ in $G$.

We can obtain $Y$ as
\[
Y = \{x \in X \mid \forall h \in S: c_{g,h}(x) = 0; \forall h \in F \smallsetminus H: c_{g,h}(x) \neq 0\}.
\]
Thus the condition $x \in Y$ is defined by constraints on $c_{g,h}(x)$ for finitely many $h \in G$; it follows that $Y$ is clopen.

Finally, we see that $X^G_{\aper}$ is closed, because we can write it as the following intersection of closed sets:
\[
X^G_{\aper} := \bigcap_{H \le G, |G:H| < \infty} \left( X \smallsetminus \{x \in X \mid G_x = H\} \right). \qedhere
\]
\end{proof}

\subsection{Minimality and pointwise almost periodicity}

Given a compact Hausdorff space $X$, a subgroup $G$ of $\Homeo(X)$ and a subset $Y \subseteq X$, we say $Y$ is \defbold{(strongly) $G$-invariant} if $gY = Y$ for all $g \in G$, and $Y$ is \defbold{$G$-minimal} (or $G$ is minimal on $Y$) if $Y$ is nonempty, $G$-invariant, and the only proper closed $G$-invariant subset of $Y$ is the empty set.

The following is immediate from compactness and Zorn's lemma.

\begin{lem}\label{lem:minimal_exists}
Let $X$ be a compact Hausdorff space, let $G$ be a subgroup of $\Homeo(X)$ and let $K$ be a nonempty closed $G$-invariant subspace of $X$.  Then $K$ contains a $G$-minimal subspace.
\end{lem}

On a compact Hausdorff space, if a cyclic subgroup $\grp{g} \le \Homeo(X)$ is minimal, then in fact all forward orbits of $g$ are dense, in other words there is no proper nonempty closed subset $K$ such that $gK \subseteq K$.

\begin{lem}\label{lem:minimal_forwards}
Let $X$ be a nonempty compact Hausdorff space and let $g \in \Homeo(X)$.  Then the following are equivalent:
\begin{enumerate}[(i)]
\item whenever $K$ is a proper closed subspace of $X$ such that $gK \subseteq K$, then $K = \emptyset$;
\item whenever $K$ is a proper closed subspace of $X$ such that $gK = K$, then $K = \emptyset$.
\end{enumerate}
\end{lem}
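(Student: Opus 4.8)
The implication (i)$\Rightarrow$(ii) is immediate, since $gK = K$ in particular gives $gK \subseteq K$. So the content is in (ii)$\Rightarrow$(i), and the plan is to take a proper closed $K$ with $gK \subseteq K$ and manufacture from it a proper closed \emph{strongly} invariant set to which hypothesis (ii) applies.

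The natural candidate is $K' := \bigcap_{n \ge 0} g^n K$. First I would record that the iterated images form a descending chain: from $gK \subseteq K$ we get $g^{n+1}K = g(g^nK) \subseteq g(g^{n-1}K) = g^nK$ for all $n \ge 0$, so $K \supseteq gK \supseteq g^2K \supseteq \cdots$. Consequently $K'$ is a nested intersection of closed sets, hence closed, and $K' \subseteq K \subsetneq X$, so $K'$ is proper. Next I would check $gK' = K'$: we have $gK' = \bigcap_{n \ge 0} g^{n+1}K = \bigcap_{n \ge 1} g^nK$, and since the chain is descending this last intersection already contains the $n=0$ term as a superset, so $\bigcap_{n \ge 1} g^nK = \bigcap_{n \ge 0} g^nK = K'$. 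Applying (ii) to $K'$ then yields $K' = \emptyset$.

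It remains to deduce $K = \emptyset$, and this is the one place where compactness is essential (without it the lemma is false, e.g. a translation on $\bZ$). Since $X$ is compact Hausdorff, the closed sets $g^nK$ have the finite intersection property unless one of them is empty; as the chain $\{g^nK\}_{n \ge 0}$ is descending with empty intersection $K' = \emptyset$, some $g^NK$ must be empty. Because $g$ is a homeomorphism, $g^NK = \emptyset$ forces $K = \emptyset$, completing the argument. I do not anticipate any genuine obstacle here; the only subtlety worth flagging explicitly in the write-up is this final compactness step, together with the verification that $K'$ is genuinely $g$-invariant (not merely forward-invariant), which relies on the chain being descending.
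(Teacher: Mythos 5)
Your proposal is correct and follows essentially the same route as the paper: both form the descending intersection $\bigcap_{n \ge 0} g^nK$, apply the strong-invariance hypothesis to conclude it is empty, and then use compactness to force some $g^NK$ (hence $K$) to be empty. The only detail worth making explicit in the write-up is that the identity $g\bigl(\bigcap_n g^nK\bigr) = \bigcap_n g^{n+1}K$ uses the injectivity of $g$, which the paper flags.
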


\begin{proof}
Clearly (i) implies (ii).  Conversely, suppose that (ii) holds, and let $K$ be a proper closed subspace of $X$ such that $gK \subseteq K$.  We see that in fact $g^mK \subseteq g^nK$ whenever $m \ge n \ge 0$.  Set $L = \bigcap_{n \ge 0}g^nK$.  Then $L$ is compact and $gL = \bigcap_{n \ge 1}g^nK = L$ (here we use the fact that $g$ is injective); since $L \subseteq K \neq X$, it follows that $L = \emptyset$.  By compactness we must have $g^nK = \emptyset$ for some $n$, so in fact $K = \emptyset$, proving (i).
\end{proof}

Locally minimal actions can be defined in a few equivalent ways, as in the next proposition.

\begin{prop}\label{prop:locally_minimal}
Let $X$ be a nonempty compact Hausdorff space, let $G$ be a subgroup of $\Homeo(X)$ and let $Y \subseteq X$ be closed and $G$-invariant.  Then the following are equivalent:
\begin{enumerate}[(i)]
\item every closed $G$-minimal subset of $Y$ has nonempty interior in $X$;
\item every closed $G$-invariant subset of $Y$ is open in $X$;
\item $Y$ is the disjoint union of finitely many $G$-minimal clopen subspaces of $X$.
\end{enumerate}
\end{prop}

\begin{proof}
The implications (iii) $\Rightarrow$ (ii) $\Rightarrow$ (i) are all clear.  Suppose now that (i) holds, and let $U$ be the union of the $X$-interiors of all closed $G$-minimal subspaces of $Y$.  Then by applying Lemma~\ref{lem:minimal_exists} to the closed $G$-invariant space $Y \smallsetminus U$, we see that $U = Y$.  We now see by compactness that $Y$ is a finite union of $X$-interiors of closed $G$-minimal subspaces; moreover, the intersection between any two distinct closed $G$-minimal subspaces is empty by minimality, so actually $Y$ is a disjoint union of closed $G$-minimal subspaces and each of these is clopen.  Thus (iii) holds, completing the cycle of implications.
\end{proof}

A natural generalization of minimal homeomorphisms of a compact Hausdorff space are pointwise almost periodic homeomorphisms, defined as follows.

\begin{defn}\label{def:pap}
A subset $S$ of a group $G$ is \defbold{left syndetic} (or \defbold{left cobounded}) if $G= FS$ for a finite subset $F$ of $G$; analogously, a subset $S$ of $\bN$ is \defbold{syndetic} (or \defbold{cobounded}) if there is some $k \in \bN$ such that for all $n \in \bN$, we have $n+i \in S$ for some $0 \le i \le k$.  Let $G$ be a group of homeomorphisms of a Hausdorff space $X$.  Given a point $x \in X$ and a neighbourhood $U$ of $x$, define the set of \defbold{return times} $R_G(x,U) := \{g \in G \mid gx \in U\}$; for a single homeomorphism $g$ we define $R_g(x,U) := \{n \in \bZ \mid g^nx \in U\}$.  We say $G$ is \defbold{almost periodic at $x$} if $R_G(x,U)$ is a left syndetic subset of $G$ for every neighbourhood $U$ of $x$.  We say $G$ is \defbold{pointwise almost periodic} (\pap) on $X$ if it is almost periodic at every point in $X$.  For a single homeomorphism $g$, we say $g$ is \pap on $X$ if $\grp{g}$ is \pap on $X$.
\end{defn}

\begin{lem}[See for instance {\cite[Lemma 3.5]{Reid}}]\label{lem:pap_char}
Let $G$ be a group of homeomorphisms of a locally compact Hausdorff space $X$.  Then $G$ is \pap on $X$ if and only if, for every $x \in X$, the orbit closure $\overline{Gx}$ is a compact minimal $G$-space.
\end{lem}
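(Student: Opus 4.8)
The plan is to prove the two implications separately, leaning on Lemma~\ref{lem:minimal_forwards} for the characterization of minimality on a compact Hausdorff space, and on the elementary fact that in a compact minimal $g$-space $Y$ every $\grp{g}$-orbit is dense (were $\ol{\grp{g}z}$ a proper closed subspace of $Y$ it would satisfy $g(\ol{\grp{g}z}) \subseteq \ol{\grp{g}z}$ and hence be empty by minimality, contradicting $z \in \ol{\grp{g}z}$).

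For the direction ``\pap $\Rightarrow$ every orbit closure is a compact minimal $g$-space'', fix $x$ and put $Y := \ol{\grp{g}x}$. First I would prove $Y$ is compact: choose a compact neighbourhood $U$ of $x$ (using local compactness) and let $k$ be a syndeticity constant for the return time set $n_g(x,U)$; then for each $n \in \bZ$ there is $j \in \{0,\dots,k\}$ with $g^{n+j}x \in U$, so $g^n x \in g^{-j}U$, giving $\grp{g}x \subseteq \bigcup_{j=0}^{k} g^{-j}U$; the right-hand side is a finite union of compact sets, hence compact and closed, so $Y$ is a closed subset of it and therefore compact. Next I would prove $g$ is minimal on the now-compact Hausdorff space $Y$: by Lemma~\ref{lem:minimal_forwards} it is enough that every proper closed $K \subseteq Y$ with $gK = K$ be empty, and since a nonempty such $K$ would contain $\ol{\grp{g}z}$ for some $z \in Y$, this reduces (using $Y = \ol{\grp{g}x}$ together with the fact that orbit closures are closed and $g$-invariant) to the single claim that $x \in \ol{\grp{g}z}$ for every $z \in Y$. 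For this claim: given an open $U \ni x$, use regularity of $Y$ to pick open $V$ with $x \in V \subseteq \ol V \subseteq U$, let $k$ be a syndeticity constant for $n_g(x,V)$, take a net with $g^{m_\alpha}x \to z$, and for each $\alpha$ choose $j_\alpha \in \{0,\dots,k\}$ with $g^{m_\alpha+j_\alpha}x \in V$; since $\{0,\dots,k\}$ is finite, some value $j$ occurs for a cofinal set of indices, and restricting to those gives $g^j z = \lim_\alpha g^{m_\alpha+j}x \in \ol V \subseteq U$, so $\grp{g}z$ meets $U$, as required.

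For the converse, assume every orbit closure is a compact minimal $g$-space, fix $x$, put $Y := \ol{\grp{g}x}$, and let $U$ be any neighbourhood of $x$. The set $V := \mathrm{int}(U) \cap Y$ is a nonempty open subset of $Y$, so by density of orbits the family $\{g^{-n}V : n \in \bZ\}$ is an open cover of the compact space $Y$; taking a finite subcover with indices $n_1,\dots,n_r$, for every $t \in \bZ$ we get $g^t x \in g^{-n_i}V$ for some $i$, i.e.\ $g^{t+n_i}x \in U$, so $n_g(x,U)$ meets the interval $[t+\min_i n_i, t+\max_i n_i]$ and is therefore syndetic. As $U$ and $x$ were arbitrary, $g$ is \pap, completing the argument.

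The step I expect to be the main obstacle is the minimality half of the first implication, and within it the claim $x \in \ol{\grp{g}z}$: the trick that makes the net argument go through is that the phase shift needed to push a point of $\grp{g}x$ lying near $z$ back into a neighbourhood of $x$ can always be chosen from the fixed finite window $\{0,\dots,k\}$ determined by the syndeticity constant, so a pigeonhole step on the net yields one shift $j$ valid along a cofinal subnet. Everything else is a routine application of compactness and unwinding of the definitions.
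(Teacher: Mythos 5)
Your proof is correct. The paper itself gives no argument for this lemma — it is quoted from \cite{Reid} — so there is nothing in-text to compare against, but what you have written is the standard self-contained proof: syndeticity of the return times to a compact neighbourhood yields compactness of the orbit closure via $\grp{g}x \subseteq \bigcup_{j=0}^k g^{-j}U$; the pigeonhole-on-a-finite-window argument yields $x \in \overline{\grp{g}z}$ for every $z$ in the orbit closure, which combined with Lemma~\ref{lem:minimal_forwards} gives minimality; and conversely a finite subcover drawn from $\{g^{-n}V\}_{n\in\bZ}$ gives syndeticity with constant $\max_i n_i - \min_i n_i$. One small point of hygiene in the minimality step: the set $V$ to which you apply the \pap hypothesis must be a neighbourhood of $x$ in $X$, whereas ``regularity of $Y$'' only produces a set open in $Y$. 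Either invoke regularity of $X$ instead (locally compact Hausdorff spaces are regular), or write $V = W \cap Y$ with $W$ open in $X$, apply \pap to $W$, and note that every point $g^n x$ lies in $Y$, so visits to $W$ are visits to $V$ and the limit $g^j z$ lies in $\mathrm{cl}_X(V) \cap Y = \mathrm{cl}_Y(V) \subseteq U$; either fix is immediate and the rest of the argument is sound.
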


We note a consequence of having a minimal-periodic partition that will be useful later.

\begin{lem}\label{lem:minimal_union}
Let $X$ be a nonempty compact Hausdorff space and let $g \in \Homeo(X)$ admit a minimal-periodic partition.  Let $K$ be a closed subspace of $X$ such that $\grp{g}x \cap K \neq \emptyset$ for all $x \in X$.  Then
\[
X = \bigcup^k_{i=0}g^iK
\]
for some finite $k$.
\end{lem}

\begin{proof}
By hypothesis,
\[
X = \bigsqcup^l_{i=1}X_\per(i) \sqcup \bigsqcup^m_{i=1}X_\aper(i),
\]
where the subspaces $X_\per(i)$ and $X_\aper(i)$ are all clopen, $g$-orbits on $X_\per(i)$ have size $n_i$, and $g$ acts minimally on $X_\aper(i)$.  Let $k_\per = \max\{n_1,\dots,n_l\}$; then $X_\per(j) \subseteq \bigcup^{k_\per}_{i=0}g^iK$ for $1 \le j \le l$.  So by replacing $K$ with $\bigcup^{k_\per}_{i=0}g^iK$, we can ignore the periodic points of $g$ and assume that $g$ is aperiodic.

Let $K_i = K \cap X_\aper(i)$.  Then $K_i$ is closed and our hypothesis ensures that $X_\aper(i) = \bigcup_{j \in \bZ}g^jK_i$.  By the Baire Category Theorem it follows that $K_i$ has nonempty interior $K'_i$.  Since $g$ acts minimally on $X_\aper(i)$ we see that $X_\aper(i) = \bigcup_{j \in \bZ}g^jK'_i$, and then by compactness there is $k_i \in \bN$ such that $X_\aper(i) = \bigcup^{k_i}_{j=0}g^jK$.  Taking $k = \max\{k_1,\dots,k_m\}$ finishes the proof.
\end{proof}

\subsection{Induced transformations}

Given a \pap homeomorphism $h$, we can use return times to define the \defbold{induced transformation} on a clopen subspace.

\begin{defn}
Let $g$ be a \pap (for example, minimal) homeomorphism of a compact Hausdorff space $X$ and let $Y$ be a clopen subset of $X$.  Then for all $x \in Y$, by Lemma~\ref{lem:pap_char} there exists $n > 0$ such that $g^nx \in Y$.  Define the \defbold{induced transformation} $g_Y:Y \rightarrow Y$ by setting $g_Yx = g^tx$, where $t$ is the least element of $R_g(x,Y) \cap \bN_{>0}$, or in other words $g_Yx$ is the $\le_g$-next element of $Y$ after $x$.

Given homeomorphisms $h_i \in \Homeo(X_i)$, define the \defbold{join} $h_1 \sqcup h_2$ to be the homeomorphism $g$ of $X_1 \sqcup X_2$ given by $gx = h_ix$ for all $x \in X_i$.
\end{defn}

The \pap property ensures that induced transformations are well-defined and well-behaved.  We see in particular that all induced transformations of a \pap homeomorphism belong to its topological full group.  The following basic facts will be used without further comment.

\begin{lem}\label{lem:induced}
Let $h$ be a \pap homeomorphism of a compact Hausdorff space $X$ and let $Y$ be a nonempty clopen subset of $X$.
\begin{enumerate}[(i)]
\item Given $x \in Y$, then $\grp{h_Y}x = \grp{h}x \cap Y$ and $(h\inv)_Y = (h_Y)\inv$.
\item $h_Y$ is a homeomorphism of $Y$ and the join $h_Y \sqcup \id_{X \smallsetminus Y}$ belongs to $\Fgp{h}$.
\item $h_Y$ is \pap on $Y$.  If $h$ is minimal, then so is $h_Y$.
\end{enumerate}
\end{lem}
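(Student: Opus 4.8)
The plan is to verify the four assertions in order, using only the \pap hypothesis and the defining property of the induced transformation via return times.

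For (i), fix $x \in Y$ and let $t > 0$ be least with $h^t x \in Y$, so that $h_Y x = h^t x =: y$. I would then argue that $t$ is also the least positive integer with $h^{-t} y \in Y$: if $0 < s < t$ and $h^{-s} y \in Y$, then $h^{t-s}x = h^{-s}y \in Y$ with $0 < t - s < t$, contradicting minimality of $t$. Hence $(h\inv)_Y y = h^{-t} y = x$, which shows $(h\inv)_Y \circ h_Y = \id_Y$; the symmetric argument gives the other composite, so $(h\inv)_Y = (h_Y)\inv$.

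For (ii), the map $h_Y$ is a bijection of $Y$ by (i). Continuity is the crux: for $x \in Y$, by \pap there is $n>0$ with $h^n x \in Y$, so the least return time $t(x)$ is well-defined, and the set $\{x \in Y \mid t(x) = k\}$ equals $h^{-k}Y \cap Y \cap \bigcap_{0 < j < k}(X \smallsetminus h^{-j}Y)$, which is clopen since $Y$ is clopen and $h$ is a homeomorphism. Thus $x \mapsto t(x)$ is locally constant on $Y$, $h_Y$ agrees locally with a fixed power of $h$, and so $h_Y$ (hence also $(h_Y)\inv$ by (i)) is continuous; being a continuous bijection of a compact Hausdorff space, it is a homeomorphism. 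The same local description shows $h_Y \sqcup \id_{X \smallsetminus Y}$ is locally a power of $h$ at every point of $X$ (trivially so on the open set $X \smallsetminus Y$), hence lies in $\Fgp{h}$.

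For (iii), given $x \in Y$ and any $y \in \grp{h}x \cap Y$, write $y = h^N x$; by induction on the number of visits of the $h$-orbit segment between $x$ and $y$ to $Y$, the point $y$ is reached from $x$ by finitely many applications of $h_Y$ or $h_Y\inv$ (using \pap to guarantee each successive return time is finite, in both forward and backward directions), so $\grp{h_Y}$ is transitive on $\grp{h}x \cap Y$. For (iv), \pap of $h_Y$ follows from Lemma~\ref{lem:pap_char}: for $x \in Y$, the orbit closure $\overline{\grp{h_Y}x}$ (taken in $Y$) is contained in $\overline{\grp{h}x} \cap Y$, which is compact, and one checks using (iii) together with syndeticity of the $h$-return times to $Y$ that the $h_Y$-return times to any clopen neighbourhood of a point are syndetic; hence every $h_Y$-orbit closure is a compact minimal $h_Y$-space. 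If $h$ is minimal, then $\overline{\grp{h}x} = X$ for all $x$, and transitivity of $\grp{h_Y}$ on the dense set $\grp{h}x \cap Y$ forces $\overline{\grp{h_Y}x} = Y$, so $h_Y$ is minimal.

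I expect the continuity/clopenness argument in (ii)---establishing that the return-time function is locally constant---to be the main point; once that is in place, parts (i), (iii) and (iv) are bookkeeping with return times and an appeal to Lemma~\ref{lem:pap_char}.
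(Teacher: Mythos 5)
Your proposal is correct and follows essentially the same route as the paper: the same return-time bookkeeping for (i), the same observation that the return-time function is locally constant (because $Y$ is clopen and $h$ is a homeomorphism) for (ii), the identification $\grp{h_Y}x = \grp{h}x \cap Y$ for (iii), and the appeal to Lemma~\ref{lem:pap_char} for (iv). The only cosmetic point is that in (iv) it is cleaner to argue directly that $\overline{\grp{h_Y}x} = \overline{\grp{h}x}\cap Y$ is a compact minimal $h_Y$-space (clopen neighbourhoods need not form a base in a general compact Hausdorff space, so syndeticity for clopen sets alone is not the definition of \pap), which is exactly what your final appeal to Lemma~\ref{lem:pap_char} accomplishes.
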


\begin{proof}
(i)
From the definition, it is clear that for all $x \in Y$ the forward orbit of $h_Y$ visits every point in $\{x,hx,h^2x,\dots\} \cap Y$ in $\le_h$-order, and similarly the forward orbit of $(h\inv)_Y$ visits every point in $\{x,h^{-1}x,h^{-2}x,\dots\} \cap Y$ in reverse $\le_h$-order.  The conclusions are then clear. 

(ii)   We have already seen that $h_Y$ is bijective, so the join $h_Y \sqcup \id_{X \smallsetminus Y}$ is bijective.  Define $f: X \rightarrow \bN$ by setting $f(x)$ to be the least positive integer such that $h^{f(x)}x \in Y$ if $x \in Y$, and $f(x)=0$ otherwise.  By the \pap property, there is a neighbourhood $U$ of $x$ and $n > 0$ such that $h^nU \subseteq Y$; this ensures that $f$ is well-defined and continuous over $Y$.  Thus $h_Y \sqcup \id_{X \smallsetminus Y} \in \Fgp{h}$; in particular, 
\[
h_Y \sqcup \id_{X \smallsetminus Y} \in \Homeo(X) \text{ and } h_Y \in \Homeo(Y).
\]

(iii) follows immediately from (i) and Lemma~\ref{lem:pap_char}.
\end{proof}

Where there is no danger of confusion, we will identify $h_Y$ with the join $h_Y \sqcup \id_{X \smallsetminus Y}$, so that we can regard $h_Y$ as an element of $\Fgp{h}$.

In this setting, conjugating the induced transformations of $h$ by an element that centralizes $h$ has a predictable effect.

\begin{lem}\label{lem:induced:conjugate}
Let $h$ be a \pap homeomorphism of a compact Hausdorff space $X$, let $Y$ be a clopen subset of $X$ and let $f$ be a homeomorphism of $X$ that commutes with $h$.  Then
\[
fh_{Y} = h_{fY}f.
\]
\end{lem}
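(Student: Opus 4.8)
The plan is to verify the identity pointwise: fix $x \in Y$ and show that $fh_Y x = h_{fY} f x$. Since $f$ commutes with $h$, it commutes with every power $h^t$, and since $f$ is a homeomorphism, $fY$ is again a clopen subset of $X$; moreover $h$ being \pap means (by Lemma~\ref{lem:pap_char}) that $f$-translates of minimal orbit closures are minimal orbit closures, so $h$ is \pap at $fx$ and $h_{fY}$ is well-defined at $fx$.

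First I would recall that, by definition, $h_Y x = h^t x$ where $t$ is the least positive integer with $h^t x \in Y$. Applying $f$ gives $fh_Y x = f h^t x = h^t f x$, using $[f,h]=\id_X$. The remaining task is to identify this with $h_{fY}(fx)$, i.e.\ to show that $t$ is \emph{also} the least positive integer $s$ with $h^s(fx) \in fY$. But $h^s(fx) = f h^s x$ by commutativity, and since $f$ is a bijection, $f h^s x \in fY$ if and only if $h^s x \in Y$. Hence the set of positive return times of $x$ to $Y$ under $h$ coincides exactly with the set of positive return times of $fx$ to $fY$ under $h$; in particular their least elements agree, so the least such $s$ is $t$, giving $h_{fY}(fx) = h^t(fx) = h^t f x = fh_Y x$, as required.

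Since $x \in Y$ was arbitrary and $f$ restricts to a bijection $Y \to fY$, this establishes $fh_Y = h_{fY}f$ as maps on $Y$; under the convention of identifying $h_Y$ with $h_Y \sqcup \id_{X \smallsetminus Y}$, the identity extends to all of $X$ because $f$ maps $X \smallsetminus Y$ bijectively onto $X \smallsetminus fY$, where both sides act as $f$. I do not expect any serious obstacle here: the only points requiring care are that $fY$ is clopen (immediate, as $f$ is a homeomorphism) and that the induced transformation $h_{fY}$ is defined at $fx$ (immediate from Lemma~\ref{lem:induced}(iv) applied after noting $h$ is still \pap, or directly from the fact that the return-time sets are nonempty), together with the bookkeeping that the return-time sets match up under the bijection $f$. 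The essential content is just that conjugation-by-something-that-commutes-with-$h$ transports the combinatorial data defining $h_Y$ verbatim.
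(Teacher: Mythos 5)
Your proof is correct and follows essentially the same route as the paper's: verify the identity pointwise, use $[f,h]=\id_X$ to move $f$ past powers of $h$, and observe that $f$ carries the return-time set of $x$ to $Y$ bijectively onto that of $fx$ to $fY$, so the least return times agree. The extra remarks about $fY$ being clopen and $h_{fY}$ being well-defined are fine but not needed beyond noting that \pap is a property of $h$ on all of $X$.
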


\begin{proof}
Let $x \in X$.  If $x \in X \smallsetminus Y$, we see that $fh_{Y}x = h_{fY}fx = fx$.  If $x \in Y$, say that $s$ is the smallest positive integer such that $h^sx \in Y$.  Then $fh_{Y}x = fh^sx$.  At the same time, we see that $s$ is the smallest positive integer such that $fh^sx \in fY$; since $f$ and $h$ commute, this is the same as the smallest positive integer $s$ such that $h^sfx \in fY$.  Thus $h_{fY}fx = h^sfx = fh^sx$, completing the proof that $fh_{Y} = h_{fY}f$.
\end{proof}

Let $(X_1,g_1)$ and $(X_2,g_2)$ be compact minimal systems.  A \defbold{Kakutani equivalence} of $(g_1,g_2)$ is a homeomorphism $\kappa: Y_1 \rightarrow Y_2$, where $Y_i$ is a nonempty clopen subset of $X_i$, such that $\kappa \circ (g_1)_{Y_1} = (g_2)_{Y_2} \circ \kappa$.  We say that $g_1$ and $g_2$ are \defbold{Kakutani equivalent} if a Kakutani equivalence exists, and \defbold{flip Kakutani equivalent} if $g_1$ is Kakutani equivalent to $g_2$ or $g\inv_2$. We note that (flip) Kakutani equivalence is indeed an equivalence relation.

\begin{lem}\label{lem:Kakutani_equivalence}
Kakutani equivalence and flip Kakutani equivalence are equivalence relations.
\end{lem}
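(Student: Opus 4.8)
The plan is to verify reflexivity, symmetry and transitivity of Kakutani equivalence, with essentially all the work going into transitivity; the flip version will then follow from a short case analysis once we record how inducing interacts with inversion. Reflexivity is immediate (take $Y_1=Y_2=X$, $\kappa=\id_X$, and recall $g_X=g$), and so is symmetry: if $\kappa:Y_1\to Y_2$ is a Kakutani equivalence of $(g_1,g_2)$, then $\kappa^{-1}:Y_2\to Y_1$ conjugates $(g_2)_{Y_2}$ to $(g_1)_{Y_1}$. Throughout I write $\sim_K$ for Kakutani equivalence.

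For transitivity, the key auxiliary fact is that inducing is transitive: if $g$ is \pap on a compact Hausdorff space $X$ and $A\subseteq B$ are nonempty clopen subsets, then $(g_B)_A=g_A$. I would prove this directly from return times: for $x\in A$ the forward $\langle g_B\rangle$-orbit of $x$ visits exactly the points $g^ix$ ($i>0$) lying in $B$, in increasing order of $i$, and since $A\subseteq B$ the first of these lying in $A$ is $g^tx$ with $t$ the least positive integer such that $g^tx\in A$, i.e. $(g_B)_Ax=g_Ax$. I will also use the evident naturality of inducing under conjugacy (if $\psi g=g'\psi$ with $\psi$ a homeomorphism and $A$ nonempty clopen, then $\psi$ restricts to a homeomorphism $A\to\psi(A)$ conjugating $g_A$ to $g'_{\psi(A)}$), together with the special case --- or Lemma~\ref{lem:induced:conjugate} --- that $g^n$ conjugates the induced system $(W,g_W)$ to $(g^nW,g_{g^nW})$.

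Now suppose $\kappa:Y_1\to Y_2$ is a Kakutani equivalence of $(g_1,g_2)$ and $\lambda:Z_2\to Z_3$ one of $(g_2,g_3)$. The obstacle is that $Y_2$ and $Z_2$ need not meet; I resolve this using minimality of $g_2$. Since $g_2$ is minimal and $Y_2$ is nonempty and open, $X_2=\bigcup_{k=0}^{N}g_2^kY_2$ for some $N$, so $V:=Z_2\cap g_2^kY_2$ is nonempty clopen for some $0\le k\le N$; put $V':=g_2^{-k}V\subseteq Y_2$, which is nonempty clopen, and note $\kappa^{-1}(V')\subseteq Y_1$ and $\lambda(V)\subseteq Z_3$ are nonempty clopen. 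Restricting $\kappa$, then using naturality and the transitivity of inducing (to reduce $((g_1)_{Y_1})_{\kappa^{-1}(V')}$ and $((g_2)_{Y_2})_{V'}$ to $(g_1)_{\kappa^{-1}(V')}$ and $(g_2)_{V'}$), gives a conjugacy $(g_1)_{\kappa^{-1}(V')}\cong (g_2)_{V'}$; the map $g_2^k$ restricts to a conjugacy $(g_2)_{V'}\cong (g_2)_V$; and restricting $\lambda$ gives $(g_2)_V\cong (g_3)_{\lambda(V)}$. Composing, $\lambda\circ g_2^k\circ\kappa$ restricts to a homeomorphism $\kappa^{-1}(V')\to\lambda(V)$ conjugating $(g_1)_{\kappa^{-1}(V')}$ to $(g_3)_{\lambda(V)}$, hence is a Kakutani equivalence of $(g_1,g_3)$. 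Thus $\sim_K$ is an equivalence relation. This ``translate and cover'' step is the main obstacle; the rest is routine.

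Finally, for flip Kakutani equivalence, recall from Lemma~\ref{lem:induced}(i) that $(g^{-1})_Y=(g_Y)^{-1}$ for every nonempty clopen $Y$, so a homeomorphism is a Kakutani equivalence of $(g_1,g_2)$ if and only if it is one of $(g_1^{-1},g_2^{-1})$; that is, $g_1\sim_K g_2\iff g_1^{-1}\sim_K g_2^{-1}$. Reflexivity of flip Kakutani equivalence is clear from reflexivity of $\sim_K$. For symmetry one treats the two cases ($g_1\sim_K g_2$, resp. $g_1\sim_K g_2^{-1}$), using symmetry of $\sim_K$ and the displayed compatibility. For transitivity one runs through the four cases according to which of the ordinary or flipped relations holds at each stage; in each case, combining transitivity of $\sim_K$ with the compatibility yields $g_1\sim_K g_3$ or $g_1\sim_K g_3^{-1}$ (for instance, if $g_1\sim_K g_2^{-1}$ and $g_2\sim_K g_3$, then $g_2^{-1}\sim_K g_3^{-1}$, so $g_1\sim_K g_3^{-1}$). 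Hence flip Kakutani equivalence is an equivalence relation as well.
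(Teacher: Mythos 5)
Your proposal is correct and follows essentially the same route as the paper: both handle transitivity by using minimality of $g_2$ to find a clopen set inside $Y_2$ whose forward translate lands in $Z_2$, then compose the restricted equivalences with a power of $g_2$ via Lemma~\ref{lem:induced:conjugate}, relying on the identity $((g)_B)_A = g_A$ for $A \subseteq B$. Your explicit verification of that identity and of the flip case is slightly more detailed than the paper's, but the argument is the same.
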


\begin{proof}
It is clear that Kakutani equivalence is reflexive and symmetric.  To show transitivity, let $(X_i,g_i)$ be a compact minimal system for $1 \le i \le 3$ and suppose we have Kakutani equivalences $\kappa_{12}: Y_1 \rightarrow Y_2$ and $\kappa_{23}: Z_2 \rightarrow Z_3$, where $Y_i$ and $Z_i$ are nonempty clopen subspaces of $X_i$.  Since $g_2$ is minimal, there is some nonempty clopen subset $W$ of $Y_2$ and $t \ge 0$ such that $g^t_2W \subseteq Z_2$.  Note that the restriction of $\kappa_{12}$ to a homeomorphism from $Y'_1 = \kappa\inv(W)$ to $Y'_2 = W$ is also a Kakutani equivalence of $(g_1,g_2)$, since $((g_i)_{Y_i})_{Y'_i} = (g_i)_{Y'_i}$.  Thus we may assume without loss of generality that $Y_2 = W$.  Similarly, we may assume without loss of generality that $Z_2 = g^t_2Y_2$.  In this case, by Lemma~\ref{lem:induced:conjugate} we see that $g^t_2(g_2)_{Y_2} = (g_2)_{Z_2}g^t_2$.

We now obtain a homeomorphism $\kappa_{13}$ from $Y_1$ to $Z_3$ by setting $\kappa_{13}x = \kappa_{23}g^t_2\kappa_{12}x$.

Let $x \in Y_1$.  Then
\begin{align*}
\kappa_{13}(g_1)_{Y_1}x &=  \kappa_{23}g^t_2\kappa_{12}(g_1)_{Y_1}x =  \kappa_{23}g^t_2(g_2)_{Y_2}\kappa_{12}x\\
 &= \kappa_{23}(g_2)_{Z_2}g^t_2\kappa_{12}x = (g_3)_{Z_3}\kappa_{23}g^t_2\kappa_{12}x = (g_3)_{Z_3}\kappa_{13}x.
\end{align*}
Thus $\kappa_{13}$ is a Kakutani equivalence of $(g_1,g_3)$.  This proves that Kakutani equivalence is transitive and hence it is an equivalence relation.  It is then clear that flip Kakutani equivalance is also an equivalence relation.
\end{proof}

\section{The sign and minimal-periodic partitions}\label{sec:sign}

Given a group $G$ of homeomorphisms of a compact Hausdorff space $X$, recall that $X^G_{\per}$ is the union of all finite $G$-orbits on $X$ and $X^G_{\aper}$ is the union of all infinite $G$-orbits on $X$ (with the superscript omitted if $G$ is clear from context).

\subsection{Cyclic subgroups}\label{sec:cyclic-sign}

The following lemma is easily seen, for instance by observing that for any constant $c$, then $f(n) - c$ can only change sign finitely many times as $n$ increases.

\begin{lem}\label{lem:Lipschitz}
Let $f: \bN \rightarrow \bZ$ be an injective Lipschitz function (that is, $|f(n+1)-f(n)|$ is bounded over $n \in \bN$).  Then as $n$ increases, $f(n)$ tends to $+\infty$ or $-\infty$.
\end{lem}

Given an aperiodic homeomorphism $g$ and $h \in \Fgp{g}$, we can define the orientation of $h$-orbits on $X^{\grp{h}}_\aper$ relative to the action of $g$.

\begin{lem}\label{lem:aperiodic_drift}
Let $X$ be a compact Hausdorff space, let $g \in \Homeo(X)$ be aperiodic and let $h \in \Fgp{g}$.  Given $x \in X$ define $\phi_x: \bZ \rightarrow \bZ$ via $\phi_x(n) := c_{g,h^n}(x)$, and set
\begin{align*}
X_{h+} &:= \{x \in X \mid \phi_x(n) \rightarrow +\infty \text{ as } n \rightarrow +\infty\}; \\
X_{h-} &:= \{x \in X \mid \phi_x(n) \rightarrow -\infty \text{ as } n \rightarrow +\infty\}.
\end{align*}
Then the following holds.
\begin{enumerate}[(i)]
\item We have $X^{\grp{h}}_{\aper} = X_{h+} \sqcup X_{h-}$.
\item The sets $X_{h+}$ and $X_{h-}$ are each $\grp{h}$-invariant $F_\sigma$ subsets of $X$.
\item Every closed $\grp{h}$-minimal subset of $X^{\grp{h}}_{\aper}$ is contained in $X_{h+}$ or $X_{h-}$.
\item Suppose the sets $X_{h+}$, $X_{h-}$, $X_{h\inv+}$ and $X_{h\inv-}$ are closed.  Then $X_{h\inv +} = X_{h-}$ and $X_{h\inv-} = X_{h+}$, so $h$ is positive on $X_{h+}$ and negative on $X_{h-}$.
\end{enumerate}
\end{lem}

\begin{proof}
(i)
Since $h$ can only act as $g^i$ where $|i| \le |h|_g$, we see that $\phi_x$ is a Lipschitz map for each $x \in X$; hence by Lemma~\ref{lem:Lipschitz}, $X^{\grp{h}}_\aper = X_{h+} \sqcup X_{h-}$.
 
(ii)
The functions $\phi_x$ satisfy the formula
\[
\phi_{hx}(n) = \phi_{x}(n+1) - \phi_x(1),
\]
so the asymptotic behaviour of $\phi_{hx}$ is the same as that of $\phi_x$; it follows that $hX_{h+} = X_{h+}$ and $hX_{h-} = X_{h-}$.   We see that $X_{h+}$ and $X_{h-}$ are $F_\sigma$-sets from the arrangement of quantifiers in the definitions.

(iii)
Let $K$ be a closed $\grp{h}$-minimal subspace of $X^{\grp{h}}_{\aper}$.  Write $K_{h\pm} = K \cap X_{h\pm}$; then by (ii), $K$ is the disjoint union of $K_{h+}$ and $K_{h-}$, both of which are $\grp{h}$-invariant $F_{\sigma}$ sets.  By the Baire Category Theorem, one of $K_{h+}$ and $K_{h-}$, say $K_{h+}$, has nonempty interior in $K$; by minimality of $K$ it follows that $K = K_{h+}$.

(iv)
It is enough to show that $X_{h+} \cap X_{h\inv+}$ and $X_{h-} \cap X_{h\inv-}$ are empty; since they are compact, it is enough to show they contain no $\grp{h}$-minimal closed subspace.  So suppose for a contradiction that $X_{h+} \cap X_{h\inv+}$ contains the $\grp{h}$-minimal subspace $K$, and define $L \subseteq K$ by setting $x \in L$ if $x$ is the $\le_g$-least element of $\grp{h}x$, that is, $x \in Y$ and $c_{g,h^n}(x) \ge 0$ for all $n \in \bZ$.  Then $L$ is a closed subset of $K$, and from the definitions of $X_{h+}$ and $X_{h\inv+}$, we see that $L$ intersects each $h$-orbit on $K$ in exactly one point; Lemma~\ref{lem:minimal_union} then implies that every $h$-orbit on $K$ is finite, which is absurd.  From this contradiction we conclude that $X_{h+} \cap X_{h\inv+}$ is empty; the proof that $X_{h-} \cap X_{h\inv -}$ is empty is similar.
\end{proof}

The next lemma answers Question~\ref{que:aperiodic} in the case that $g$ is minimal.

\begin{lem}\label{lem:open_minimal}
Let $(X,g)$ be a compact minimal system and let $h \in \Fgp{g}$.  Define $X_{h+}$ and $X_{h-}$ as in Lemma~\ref{lem:aperiodic_drift}.  Then $X_{h+}$ and $X_{h-}$ are each the disjoint union of finitely many clopen $\grp{h}$-minimal subspaces of $X$.
\end{lem}

\begin{proof}
Given Lemma~\ref{lem:aperiodic_drift}(iii), it is enough to show that $X^{\grp{h}}_{\aper}$ is the disjoint union of finitely many $\grp{h}$-minimal clopen subspaces of $X$.  In turn, by Proposition~\ref{prop:locally_minimal}, we need only show that every closed $\grp{h}$-minimal subspace $K$ of $X^{\grp{h}}_{\aper}$ has nonempty interior; without loss of generality we may assume $K \subseteq X_{h+}$.

Given $x \in K$, there exists $n > 0$ such that $c_{g,h^n}(x) > 0$; furthermore, taking the least such $n$, we ensure that $c_{g,h^n}(x) \le |h|_g$.  Thus $h^nx = g^{t}x$ for some $0< t \le |h|_g$.  We have $h^nK = K$, in particular, $g^{t}x = h^nx \in K$, so $x \in g^{-t}K$.  Letting $x$ vary over $K$, we see that $K \subseteq \bigcup^{|h|_g}_{t=1}g^{-t}K$, and hence $g^{|h|_g}K \subseteq  M$, where $M = \bigcup^{|h|_g-1}_{t=0}g^{t}K$.  We observe that $gM \subseteq M$.  Now $M$ is closed and nonempty by construction, so by Lemma~\ref{lem:minimal_forwards}, we have $M = X$.  Thus $X$ is a union of finitely many $\langle g \rangle$-translates of $K$.  In particular, $K$ has nonempty interior as required.
\end{proof}

In particular, we deduce the sign partition for $h \in \Fgp{g}$.

\begin{proof}[Proof of Theorem~\ref{thm:sign}]
Set $h_\per$ to be the restriction of $h$ to $X_\per$.  By Lemma~\ref{lem:open_minimal}, $X_+ := X_{h+}$ and $X_- := X_{h-}$ are both clopen, so $X_\per$ is also clopen and hence by Lemma~\ref{lem:periodic}, $h_\per$ is strongly periodic.  In addition, by Lemma~\ref{lem:aperiodic_drift}(iv) we see that $h$ is positive on $X_+$ and negative on $X_-$.   We thus have a clopen partition of $X$ into $\grp{h}$-invariant pieces of the required form.
\end{proof}

\subsection{The sign and minimal-periodic partitions for virtually polycyclic subgroups}\label{sec:minimal-periodic}

We now extend the minimal-periodic partition from cyclic to virtually polycyclic subgroups of $\Fgp{g}$.

Given $x \in X$ and a finitely generated subgroup $G$ of $\Fgp{g}$, there are only finitely many infinite $G$-orbits on $\grp{g}x$.

\begin{lem}\label{lem:infinite_orbit_bound}
Let $X$ be a compact Hausdorff space, let $g \in \Homeo(X)$, let $S$ be a finite subset of $\Fgp{g}$, such that $S = S\inv$, and let $G = \grp{S}$.  Then there are at most $2|S|_g$ infinite $G$-orbits on $\grp{g}x$, where $|S|_g := \max \{ |c_{g,h}(x)| \mid x \in X, h \in S\}$.
\end{lem}

\begin{proof}
Let $s = |S|_g$.  Given an infinite $G$-orbit $\Omega$, then $\Omega$ is unbounded above or below (or both) with respect to $\le_g$.  We claim that there are at most $s$ infinite $G$-orbits unbounded above, and at most $s$ infinite $G$-orbits unbounded below; as the arguments are the same, we will only consider $G$-orbits that are unbounded above.  Suppose for a contradiction that we have distinct, and hence disjoint, $G$-orbits $\Omega_0,\Omega_1,\dots,\Omega_{s}$ on $\grp{g}x$, each unbounded above; for each $i$ choose $t_i \in \bZ$ such that $g^{t_i}x \in \Omega_i$; and let $t = \max\{t_0,t_1,\dots,t_{s}\}$.  Then we see that there is some element $h_i$ of $G$ such that $g^tx \le_g h_ig^{t_i}x$; by taking $h_i$ to be of minimal word length with respect to $S$, we also ensure that $h_ig^{t_i}x \le_g g^{t+s-1}x$, and clearly $h_ig^{t_i}x \neq h_jg^{t_j}x$ whenever $i \neq j$.  We have thus defined an injective map from $\{\Omega_0,\Omega_1,\dots,\Omega_s\}$ to the set $\{g^ts,g^{t+1}s,\dots,g^{t+s-1}x\}$, which is clearly impossible.
\end{proof}

In a virtually polycyclic group $G$, every subgroup $H$ of $G$ is itself virtually polycyclic, and hence finitely generated.  Here is another easy fact about virtually polycyclic groups.

\begin{lem}\label{lem:polycyclic_normal}
Let $G$ be an infinite virtually polycyclic group.  Then $G$ has an infinite torsion-free abelian characteristic subgroup.
\end{lem}

\begin{proof}
Let $H$ be a polycyclic characteristic subgroup of $G$ of finite index and let $N$ be the last infinite term in the derived series of $H$.  Then $N$ is finitely generated with finite derived subgroup, so its centre has finite index.  Thus for $n$ large enough, $N^{n!}$ is infinite and torsion-free abelian; by construction $N^{n!}$ is characteristic in $G$.
\end{proof}

\begin{lem}\label{lem:aperiodic_cyclic}
Let $(X,g)$ be a compact minimal system, let $G$ be a virtually polycyclic subgroup of $\Fgp{g}$ and let $x \in X$ be such that $Gx$ is infinite.  Let $K$ be the kernel of the action of $G$ on $Gx$ and let $N/K$ be an infinite torsion-free abelian normal subgroup of $G/K$ (as given by Lemma~\ref{lem:polycyclic_normal}).  Then given $x \in X$, there are $h_1,\dots,h_n \in N$ and $t_1,\dots,t_n \in \bZ$ such that
\[
Gx = \bigsqcup^n_{i=1} \grp{h_i}g^{t_i}x,
\]
where each of the sets $\grp{h_i}g^{t_i}x$ is infinite.
\end{lem}

\begin{proof}
Since $N/K$ is infinite and torsion-free abelian, we can take an infinite cyclic subgroup $\grp{h}$ of $N$ that acts faithfully on $Gx$.  By Theorem~\ref{thm:sign}, the finite orbits of $\grp{h}$ are of bounded length, so $\grp{h}$ must have an infinite orbit $\grp{h}y$ where $y = g^tx \in Gx$.  Since $\grp{h}K$ is normal in $N$, every $h$-orbit on $Ny$ is infinite, and hence by Lemma~\ref{lem:infinite_orbit_bound}, $Ny$ is partitioned into finitely many $h$-orbits.  In turn, since $N$ is normal in $G$, we see that $Gx$ is partitioned into infinite $N$-orbits; by Lemma~\ref{lem:infinite_orbit_bound} again, there are only finitely many of these.  Thus $Gx$ is expressible as a disjoint union of finitely many sets, such that each set is an infinite orbit of a cyclic subgroup of $N$, as required.
\end{proof}

We now complete the proof of Theorem~\ref{thm:minimal_periodic} and Corollary~\ref{cor:pap}.

\begin{proof}[Proof of Theorem~\ref{thm:minimal_periodic}]
Let $g$ be a homeomorphism of a compact infinite Hausdorff space $X$ such that $g$ admits a minimal-periodic partition, and let $G \le \Fgp{g}$ be virtually polycyclic.  It is enough to show $G$ has a minimal-periodic partition with the required properties on $X^{\grp{g}}_{\per}$ and on each infinite clopen $\grp{g}$-minimal subspace of $X$.  On $X^{\grp{g}}_{\per}$, the conclusion follows easily from Lemma~\ref{lem:periodic} and compactness.  So we may assume without loss of generality that $g$ is minimal on $X$.

It is clear from Lemma~\ref{lem:aperiodic_cyclic} that $X^G_\aper = \bigcup_{h \in G}X^{\grp{h}}_{\aper}$; by Theorem~\ref{thm:sign} and Lemma~\ref{lem:periodic}, it follows that $X^G_{\aper}$ is clopen.  By compactness, indeed
\[
X^G_{\aper} = \bigcup^k_{i=1} X^{\grp{h_i}}_{\aper}
\]
for some $h_1,\dots,h_k \in G$.  We then deduce from Lemma~\ref{lem:open_minimal} that $X^G_{\aper}$ is partitioned into finitely many clopen $G$-minimal subspaces $X_{\aper}(1),\dots,X_{\aper}(m)$.

We now know that $X_{\per}$ is compact.  Applying Lemma~\ref{lem:periodic} to $X_{\per}$, we see that $X_{\per}$ is partitioned into finitely many clopen subspaces such that the point stabilizer in $G$ is constant on each subspace, and hence finitely many clopen $G$-invariant subspaces such that the point stabilizer in $G$ belongs to a constant conjugacy class on each subspace.  We have now established that $G$ admits a minimal-periodic partition on $X$.

Now consider an infinite closed $G$-minimal subspace $Y$ of $X$, and let $x \in Y$.  We see that the kernel $K$ of the action of $G$ on $Y = \ol{Gx}$ is the same as the kernel of the action of $G$ on $Gx$.  Taking $N$ an infinite torsion-free abelian normal subgroup of $G/K$, we see by Lemma~\ref{lem:aperiodic_cyclic} and its proof that $Gx$ is partitioned into finitely many infinite $N$-orbits, such that on each $N$-orbit, $N$ acts as a virtually cyclic group.  After replacing $N$ with $\grp{h^t \mid h \in N}K$ for a suitable natural number $t$, we can in fact ensure that $Gx$ is partitioned into finitely many infinite $N$-orbits $Ny_1,\dots,Ny_r$ (possibly more such orbits than before), such that now $N/N_i \cong \bZ$, where $N_i$ is the kernel of the action of $N$ on $Ny_i$.  Note that $N_i$ is also the kernel of the action of $N$ on $\ol{Ny_i}$.  Now
\[
Y = \ol{Gx} = \bigcup^r_{i=1}\ol{Ny_i};
\]
since $N$ itself has a minimal-periodic partition, we see that $Y \subseteq X^N_{\aper}$ and each of the subspaces $\ol{Ny_i}$ is one of the infinite minimal clopen $N$-invariant subspaces of $X$, so if $\ol{Ny_i}$ and $\ol{Ny_j}$ are distinct, they are disjoint.  After reordering, the sequence $\ol{Ny_1},\dots,\ol{Ny_n}$ lists all the distinct subspaces of $X$ in $\ol{Ny_1},\dots,\ol{Ny_r}$, and we have
\[
Y = \ol{Ny_1} \sqcup \ol{Ny_2} \sqcup \dots \sqcup \ol{Ny_n},
\]
giving the required decomposition of $Y$.
\end{proof}

\begin{proof}[Proof of Corollary~\ref{cor:pap}]
Let $f$ be a \pap homeomorphism of $X$, let $G$ be a virtually polycyclic subgroup of $\Fgp{f}$, let $x \in X$ and let $Y = \overline{Gx}$.  By Lemma~\ref{lem:pap_char} it is enough to show $Y$ is compact and $G$-minimal.  We may assume $Gx$ is infinite, otherwise there is nothing to prove.  Then $Y$ is contained in some orbit closure $Z$ of $f$; since $f$ is \pap on $X$, $Z$ is compact and $\grp{f}$-minimal.  Now by Theorem~\ref{thm:minimal_periodic}, $x$ belongs to some clopen $G$-minimal subspace $Y'$ of $Z$; clearly we then have $Y = Y'$.
\end{proof}

\section{The relationship between positive and strongly positive elements}\label{sec:positive}

We note some easy facts about $\Fgpp{g}$ when $g$ is an aperiodic homeomorphism.

\begin{prop}\label{prop:po:cocycle}
Let $X$ be an infinite compact Hausdorff space and let $g \in \Homeo(X)$ be aperiodic.
\begin{enumerate}[(i)]
\item The set $\Fgpp{g}$ is a submonoid of $\Fgp{g}$ such that $\Fgp{g} = \Fgpp{g}\grp{g}$ and $g\Fgpp{g}g\inv = \Fgpp{g}$.
\item The group $\Fgp{g}$ is the universal enveloping group of $\Fgpp{g}$, that is, the embedding $\iota: \Fgpp{g} \rightarrow \Fgp{g}$ has the property that every homomorphism from $\Fgpp{g}$ to a group uniquely factors through $\iota$.
\item Let $a,b \in \Fgp{g}$.  Then $ba\inv \in \Fgpp{g}$ if and only $c_{g,a}(x) \le c_{g,b}(x)$ for all $x \in X$.
\item Suppose $g$ is \pap on $X$ and let $h \in \Fgpp{g}$.  Then $hg\inv_{\supp(h)} \in \Fgpp{g}$.
\end{enumerate}
\end{prop}

\begin{proof}
Write $G = \Fgp{g}$ and $M = \Fgpp{g}$.  Note that given $h \in G$, we have $h \in M$ if and only if $\min_{x \in X}c_{g,h}(x) \ge 0$.

(i)
Clearly $1 \in M$; given $a,b \in M$ and $x \in X$ then
\[
c_{g,ab}(x) = c_{g,a}(bx) + c_{g,b}(x) \ge 0,
\]
so $ab \in M$.  Thus $M$ is a submonoid.  Given $h \in G$, we see that $hg^{-r} \in M$ where $r = \min_{x \in X}c_{g,h}(x)$, so $h \in \grp{g}M$.  We see that $gMg\inv = M$, or in other words $gM = Mg$, using the identity
\[
\forall x \in X, m \in M: \; c_{g,gmg\inv}(x) = c_{g,m}(g\inv x).
\]

(ii)
Writing $E$ for the universal enveloping group of $M$, with homomorphism $\iota:M \rightarrow E$, then $\iota$ is injective (since $M$ embeds in a group) and $E$ is generated by $\iota(M)$ as a group.  We then observe that if a product of elements of $M \cup \{g\inv\}$ reduces to the trivial word in $G$, then the same is true of the corresponding word $w$ expressed as a product of elements of $\iota(M) \cup \{\iota(g)\inv\}$ in $E$: First multiply $w$ on the right by $\iota(g^k)\iota(g)^{-k}$, where $k$ is the number of occurrences of $\iota(g)\inv$ in $w$; use multiplication inside $\iota(M)$ to move positive powers of $\iota(g)$ to the left; and delete occurrences of $\iota(g)\inv \iota(g)$ when they occur.  The result is an expression equivalent to $w$ of the form $\iota(m)\iota(g)^{-k}$ for some $m \in M$, at which point we see that we must have $m = g^k$ in $G$ and the expression cancels out.  In particular, $\iota(M) \cup \{\iota(g)\inv\}$ generates $E$ as a monoid and we see that the canonical homomorphism $E \rightarrow G$ is an isomorphism.

(iii)
Given $a,b \in G$ and $x \in X$, we see that
\[
c_{g,ba\inv}(x) = c_{g,b}(a\inv x) + c_{g,a\inv}(x) = c_{g,b}(a\inv x) - c_{g,a}(a\inv x).
\]
Thus $ba\inv \in M$ if and only if $c_{g,a}(a\inv x) \le c_{g,b}(a\inv x)$ for all $x \in X$, or in other words, $c_{g,a}(x) \le c_{g,b}(x)$ for all $x \in X$.

(iv)
It is clear from how induced transformations are defined that $c_{g,g_{\supp(h)}}(x) \le c_{g,h}(x)$ for all $x \in X$; the conclusion then follows from (iii).
\end{proof}

We now prove Theorem~\ref{thm:positive_conjugation}, starting with two lemmas.

\begin{lem}\label{lem:cocycle_comparison}
Let $X$ be a compact Hausdorff space and let $g$ and $h$ be permutations of $X$ such that $\grp{g}x = \grp{h}x$ and $|\grp{g}x| = \infty$ for all $x \in X$.  Suppose that one of $g$ and $h$ is a homeomorphism, and that there is a continuous function $c_{g,h}: X \rightarrow \bZ$ such that for all $x \in X$, $hx = g^{c_{g,h}(x)}x$.  Then $g$ and $h$ are both homeomorphisms and $\Fgp{g} = \Fgp{h}$.
\end{lem}

\begin{proof}
If $g$ is a homeomorphism, then clearly $h$ is a homeomorphism, since $h$ acts locally by powers of $g$.  Conversely, suppose $h$ is a homeomorphism and let $x \in X$.  There is some $i \in \bZ$ such that $h^ix = gx$, and then a neighbourhood $U$ of $x$ on which $h^i$ acts as a constant power of $g$; since $|\grp{g}y| = \infty$ for all $y \in X$, no nonzero power of $g$ has a fixed point, so we must have $h^iy = gy$ for all $y \in U$.  Thus there is a well-defined locally constant cocycle $c_{h,g}: X \rightarrow \bZ$, showing that $g$ is a homeomorphism.  So if either of $g$ and $h$ are homeomorphisms, then both are; since the cocycles $c_{g,h}$ and $c_{h,g}$ both exist, we have $\Fgp{g} = \Fgp{h}$.
\end{proof}

\begin{lem}\label{lem:delta}
Let $(X,g)$ be a compact minimal system and let $h \in \Fgpnn{g}$.  Set $h'$ to be the unique permutation of $X$ such that $h'x \ge_g x$ for all $x \in X$ and $h'$ has the same orbits as $h$.  If $x$ is a fixed point of $h$, set $\delta(x,t)=0$ for all $t \in \bZ$; otherwise set $\delta(x,t) = c_{h',h^t}(x) - t$.
\begin{enumerate}[(i)]
\item We have $\Fgp{h} = \Fgp{h'}$; in particular, $h'$ is a homeomorphism.
\item $\delta$ is a continuous function from $X \times \bZ$ to $\bZ$ that takes only finitely many values.
\item Define
\[
Z := \{x \in X \mid \forall t \in \bZ: \delta(x,t) \ge 0\}.
\]
Then for all $(x,s) \in X \times \bZ$,
\[
h^sx \in Z \Leftrightarrow s \in \underset{t \in \bZ}{\arg\min} \; \delta(x,t).
\]
In particular, $Z$ is clopen and intersects every $h$-orbit.  In addition, $h_{Z} = (h')_{Z}$.
\end{enumerate}
\end{lem}

\begin{proof}
Let $X_\aper$ be the set of aperiodic points for $h$.  We first note that $h'$ clearly exists (given the fact that $h$ has no nontrivial finite orbits), and that $c_{h',h^t}$ is well-defined: for all $x \in X_\aper$ and $t \in \bZ$, we see that $x$ and $h^tx$ belong to the same infinite $h'$-orbit, and hence $h^tx = (h')^{t'}x$ for some unique $t' \in \bZ$.  Moreover, the $\le_{h'}$-ordering on the points between $x$ and $hx$ in the $h$-orbit is the same as the $\le_g$-ordering; thus
\[
\sup_{x \in X} |c_{h',h}(x)| \le |h|_g.
\]
We deduce that $\delta(x,t)$ has linearly bounded dependence on $t$: specifically, for any $x \in X_\aper$ and $t \in \bZ$,
\[
|\delta(x,t+1) - \delta(x,t)| = |c_{h',h}(h^tx) - 1| \le |h|_{g} + 1.
\]
For the remainder of the argument we focus on the clopen set $X_\aper$ of aperiodic points for $h$, which are also aperiodic for $h'$, since by definition $\delta(x,t)=0$ whenever $x \in X \smallsetminus X_\aper$.

Let
\[
Y_+ := \{y \in X_\aper \mid \forall n \ge 0: h^ny \ge_g y\}; \quad Y_- := \{y \in X_\aper \mid \forall n \ge 0: h^{-n}y \le_g y\}.
\]
We see that $Y_+$ is a closed set, since it is defined by a conjunction of cocycle values.  Since $h \in \Fgpnn{g}$, we see that every forward $h$-orbit has a $\le_g$-least point, since $c_{g,h^n}(x) \rightarrow +\infty$ as $n \rightarrow +\infty$; if $y$ is the $\le_g$-least point in $\{h^nx \mid n \ge 0\}$, then $y \in Y_+$.  By a similar argument, one sees that $Y_-$ is a closed set that intersects every $h$-orbit.  Thus by Lemma~\ref{lem:minimal_union}, there is $k_0 \in \bN$ such that
\[
X_\aper = \bigcup^{k_0}_{i=0}h^{-i}Y_+ = \bigcup^{k_0}_{i=0}h^{-i}Y_-.
\]

\emph{Claim: Let $x \in X_\aper$.  Suppose $y \in \grp{h}x \cap Y_-$ and $z \in \grp{h}x \cap Y_+$ are such that $y \le_g x \le_g z$.  Then $y \le_h x \le_h z$.}

From the definition of $Y_-$, we see that we cannot have $y >_h x$, as this would imply $y >_g x$.  Thus, since $y \in \grp{h}x$, we must have $y \le_h x$.  A similar argument shows that $x \le_h z$.  This proves the claim.

Let $x \in X$ and choose $y \in Y_-$ such that $x \le_h y$.  We can choose $y$ to be $h^k x$ for $0 \le k \le k_0$.  From the definition of $Y_-$ we see that $x \le_g y$, so $y = (h')^{k'}x$ for some $0 \le k' \le |h|_gk$.  Now consider $l \ge 0$ such that $h^ly \in Y_+$ and $h^ly \ge_g y$.  Let $A = \{z \in \grp{h}x \mid y <_g z \le_g h^ly\}$ and let $B = \{z \in \grp{h}x \mid y <_h z \le_h h^ly\}$.  Then $A \subseteq B$ by the claim.  Clearly $|B| = l$, so $|A| \le l$.  Meanwhile, since $h'$ is strongly positive and $\grp{h'}$ has the same orbits as $\grp{h}$, we see that $h^ly = (h')^{|A|}y$.  Hence
\[
(h')^{k'+l}x = (h')^ly \ge_{h'} h^ly = h^{k+l}x.
\]
In particular,
\[
\delta(x,t) = c_{h',h^t}(x) - t \le 2|h|_gk^2_0,
\]
where $t = k+l$.  There are syndetically many values $t \in \bN$ satisfying this inequality; since $\delta(x,t)$ has linearly bounded dependence on $t$, it follows that $\delta(x,t)$ is bounded above, uniformly over all of $x \in X$ and $t \ge 0$.  Similarly, we can take $y' \in \grp{h}x \cap Y_+$ such that $x \le_g y'$,  and $l \ge 0$ such that $h^ly' \in Y_-$.  Let $A' = \{z \in \grp{h}x \mid y' <_g z \le_g h^ly'\}$ and let $B' = \{z \in \grp{h}x \mid y' <_h z \le_h h^ly'\}$.  This time $B' \subseteq A'$, and a similar argument to before shows that $\delta(x,t)$ is bounded below, uniformly over all of $x \in X$ and $t \in \bN$.  We can extend this to a bound over all $t \in \bZ$ by considering the actions of $h\inv$ and $(h')\inv$ in the same way.

Note that $c_{h',h}(x)$ counts the number of points $y \in \grp{h}x$ such that $x <_g y \le_g hx$ (if $c_{g,h}(x) \ge 0$) or $hx \le_g y <_g x$ (if $c_{g,h}(x) < 0$).  The fact that $\delta$ is bounded means that the number of such points is determined by the values of $c_{g,h^i}(x)$ where $i$ ranges over a finite set of values independently of $x$.  Hence $c_{h',h}$ and $\delta$ are both continuous.  In particular, we conclude by Lemma~\ref{lem:cocycle_comparison} that $h'$ is a homeomorphism and $\Fgp{h} = \Fgp{h'}$.  This completes the proof of (i) and (ii).

The formula $c_{h',ab}(x) = c_{h',a}(bx) + c_{h',b}(x)$ implies that $\delta(x,t+s) = \delta(h^sx,t) + \delta(x,s)$ for all $x \in X$ and $s,t \in \bZ$, or in other words
\[
\delta(h^sx,t) = \delta(x,t+s) - \delta(x,s).
\]
We immediately deduce the claimed characterization in (iii) of when $h^sx \in Z$.  The fact that $Z$ is clopen and intersects every $h$-orbit then follows immediately from (ii).

We see that on $Z$, the action of $h_{Z}$ is strongly positive with respect to $h'$, but also has the same orbits as $(h')_{Z}$.  The only way this can happen is if $h_{Z} = (h')_{Z}$.  Thus (iii) is proved.
\end{proof}

\begin{proof}[Proof of Theorem~\ref{thm:positive_conjugation}]
Let $(X,g)$ be a compact minimal system.

(i)
Let $h \in \N_{\Homeo(X)}(\Fgp{g})$.  Then $\Fgp{g} = \Fgp{hgh\inv}$.  Moreover, since $hgh\inv$ acts minimally, we see by Theorem~\ref{thm:sign} that either $hgh\inv \in \Fgpnn{g}$ or $(hgh\inv)\inv \in \Fgpnn{g}$, but not both.

Suppose that $(hgh\inv)\inv \in \Fgpnn{g}$ and let $x \in X$.  Then $c_{g,(hgh\inv)^n}(x)$ tends to $-\infty$ as $n$ tends to $+\infty$, and vice versa.  It follows that for a fixed $a \in \Fgp{g}$, then $c_{g,a^n}(x)$ tends to $-\infty$ as $n$ tends to $+\infty$ if and only if $c_{hgh\inv,a^n}(x)$ tends to $+\infty$ as $n$ tends to $+\infty$, and vice versa.  Thus 
\[
h\Fgpnn{g}h\inv = \Fgpnn{hgh\inv} = (\Fgpnn{g})\inv.
\]
Similarly, if $hgh\inv \in \Fgpnn{g}$, we see that $h\Fgpnn{g}h\inv = \Fgpnn{g}$.  We conclude that in any case, $h\Fgpnn{g}h\inv \in \{\Fgpnn{g}, (\Fgpnn{g})\inv\}$.

Now suppose $h \in \Fgp{g}$.  Then we can bound $c_{g,(hgh\inv)^n}(x)$ below as follows:
\[
c_{g,(hgh\inv)^n}(x) = c_{g,hg^nh\inv}(x) = c_{g,h}(g^nh\inv x) + c_{g,g^n}(h\inv x) + c_{g,h\inv}(x) \ge -2|h|_g + n.
\]
In particular, we see that $c_{g,hg^nh\inv}(x)$ tends to $+\infty$ as $n$ tends to $+\infty$, so $hgh\inv \in \Fgpnn{g}$, and hence $h\Fgpnn{g}h\inv = \Fgpnn{g}$.

(ii)
Let $h \in \Fgpnn{g}$ and define $h' = \pipp(h)$ as in Lemma~\ref{lem:delta}.  We can partition $X$ according to the minimal-periodic partition for $h$, which is the same as the minimal-periodic partition for $h'$, and conjugate independently on each part; thus we may assume $h$ and $h'$ are minimal.  We also see in this case that $\Fgp{h'} \cap \Fgpp{g} = \Fgpp{h'}$.  Thus without loss of generality we can assume $h' = g$.

Let $\delta$ and $Z$ be as in Lemma~\ref{lem:delta}; by Lemma~\ref{lem:delta}(iii) we have $g_{Z} = h_{Z}$.

Let $x \in Z$ and let $t = c_{h,h_{Z}}(x)$.  We see that $\delta(x,t) \ge 0$ and $-\delta(x,t) = \delta(h^tx,-t) \ge 0$, so $\delta(x,t)=0$.  In other words, $g^tx = h^tx$; since $g_{Z} = h_{Z}$ it follows that $c_{g,g_Z}(x) = t$.  Thus $R_g(x,Z) = R_h(x,Z)$ for all $x \in Z$.

We now define a map $k: X \rightarrow X$ by setting $kx = h^{s_x}g^{-s_x}x$, where $s_x \in \bZ$ is chosen so that $g^{-s_x}x \in Z$.  The properties of $Z$ we have established so far ensure that a suitable choice of $s_x$ exists and that all suitable choices will result in the same value of $kx$.  To see that $k$ is bijective, note that given $z \in Z$, then $\grp{h}z = \grp{g}z$, and given $t \in \bZ$ then $kg^tz = h^tz$; it is also easily seen that $k$ acts as a locally constant power of $g$, so $k \in \Fgp{g}$.  Given $x \in X$, then
\[
kgk\inv (kx) = kgx = h^{s_{x}+1}g^{-s_x-1}(gx) = h(h^{s_x}g^{-s_x}x) = hkx;
\]
thus $h = kgk\inv$.  We then see that
\[
c_{g,k}(x) = c_{g,h^{s_x}}(g^{-s_x}x) + c_{g,g^{-s_x}}(x) = c_{g,h^{s_x}}(g^{-s_x}x) - s_x = \delta(g^{-s_x}x,s_x) \ge 0.
\]
Thus $k \in \Fgpp{g}$.

To see that $h'$ is the only $\Fgp{h}$-conjugate of $h$ that is strongly positive with respect to $g$, note that any $\Fgp{h}$-conjugate $h''$ of $h$ would have the same orbits as $h$; if we also have $h'' \in \Fgpp{g}$, it is then clear that $h'' = h'$.

(iii)
Let $k \in \Fgpp{g} \smallsetminus \{1\}$; we must show there is $h \in \Fgpp{g}$ such that $k \not\in h\Fgpp{g}h\inv$, or equivalently, $h\inv kh \not\in \Fgpp{g}$.  Since $k$ is nontrivial and positive, by the minimal-periodic partition there is a clopen $\grp{k}$-minimal subspace $U$ of $X$.  There is then a clopen subset $V$ of $U$ such that $V$ and $kV$ are disjoint.  We then take the homeomorphism $h$ of $X$, where
\[
hx = 
\begin{cases}
kk_V x &\mbox{if} \;  x \in V\\
k_Vk\inv x &\mbox{if} \;  x \in kV \\
x &\mbox{otherwise}
\end{cases}
\]
Notice that $h \in \Fgpp{k}$ and hence $h \in \Fgpp{g}$.  Now consider $h\inv kh$ for $x \in kV$.  We have
\[
h\inv kh x =  (kk_V)\inv k k_Vk\inv x = k\inv x
\]
which is the image of $x$ under a negative power of $g$, so $h\inv kh \not\in \Fgpp{g}$.
\end{proof}

\begin{rmk}
The case $h\Fgpnn{g}h\inv = (\Fgpnn{g})\inv$ of Theorem~\ref{thm:positive_conjugation}(i) does occur.  For example, let $X = \{0,1\}^*$ be the set of infinite binary strings with the pointwise convergence topology and let $g$ act as $g(0w) = 1w$ and $g(1w) = 0g(w)$ for all strings $w$.  Then $(X,g)$ is a well-known Cantor minimal system, namely the binary odometer.  If we let $h$ be the homeomorphism acting on $X = \{0,1\}^*$ by changing each digit $0$ to $1$ and vice versa, one sees that $hgh\inv = g\inv$, and hence $h\Fgpnn{g}h\inv = (\Fgpnn{g})\inv$.

When $X$ is zero-dimensional, the normalizer of $\Fgp{g}$ in $\Homeo(X)$ is in fact the whole automorphism group of $\Fgp{g}$ as a group, by \cite[Theorem~2.14]{Rubin}.
\end{rmk}

\section{Orbit numbers and the index map}\label{sec:index}

Given a compact minimal system $(X,g)$, we introduce two functions $o^+$ and $o^-$ from $\Fgp{g}$ to $\bN$ that respectively count the positive and negative infinite orbits of an element along a $g$-orbit.  Their difference will turn out to be the well-known index map, as discussed in the introduction.

\begin{prop}\label{prop:orbit_number}
Let $(X,g)$ be a compact minimal system.  Then there are functions $o^+_g$ and $o^-_g$ from $\Fgp{g}$ to $\bN$ such that $h$ has exactly $o^+_g(h)$ positive orbits and $o^-_g(h)$ negative orbits on $\grp{g}x$ for all $x \in X$ and $h \in \Fgp{g}$.  Moreover, the following holds:
\begin{enumerate}[(i)]
\item We have $o^\pm_g(h) = o^\pm_g(khk\inv)$ for all $k \in \Fgp{g}$.
\item We have
\[
o^\pm_g(h) \ge o^\pm_g(h_A) = o^\pm_{g_A}(h_A)
\]
for all clopen subspaces $A$ of $X$, with $o^\pm_g(h) = o^\pm_g(h_A)$ if and only if $A$ intersects every infinite $h$-orbit of the corresponding orientation.
\end{enumerate}
\end{prop}

\begin{proof}
Let $h \in \Fgp{g}$, let $n = |h|_g$ and let $x \in X$.  We see that every infinite $h$-orbit on $\grp{g}x$ passes through the set $\{x,gx\dots,g^{n-1}x\}$, so the number $o^x_g(h)$ of infinite $h$-orbits on $\grp{g}x$ is at most $n$.  Note that $o^{kx}_g(h) = o^x_g(h)$ for all $k \in \Fgp{g}$, since $\grp{g}kx = \grp{g}x$.  Given $x \in X$ and $k \in \Fgp{g}$, then by Theorem~\ref{thm:positive_conjugation}(i), $kgk\inv$ has the same orbits as $g$ with the same orientation; thus
\[
o^x_g(khk\inv) = o^{kx}_{kgk\inv}(khk\inv) = o^x_g(h).
\]

Suppose for the moment that $h \in \Fgpp{g}$.  Choose a neighbourhood $U_0$ of $x$ small enough that the sets $U_i := g^iU_0$ are disjoint and $c_{g,h}$ is constant on $U_i$ for $0 \le i \le 2n$.  The fact that $c_{g,h}$ is constant on $U_i$ ensures that for $0 \le i < n$, we have $hU_i \in \{U_i \mid 0 \le i < 2n\}$, and we can calculate $o^x_g(h)$ as the number of connected components in the finite graph $\Gamma$ with vertices $\{U_i \mid 0 \le i < n\}$ and an edge between $U_i$ and $U_j$ if $hU_i = U_j$.  In particular, we see that $o^x_g(h) = o^y_g(h)$ for all $y \in U_0$.  Thus if we hold $g$ and $h$ constant, $o^x_g(h)$ is a continuous function of $x \in X$ that is constant on the dense subset $\grp{g}x$ of $X$.  It follows that $o^x_g(h)$ does not depend on the choice of $x \in X$.  Thus we can define $o^+_g(h) := o^x_g(h)$ for any $x \in X$ and $o^-_g(h)=0$.

Given a nonempty clopen subset $A$ of $X$, then $\grp{g}x \cap A$ is a $\grp{g_A}$-orbit, and $\grp{h_A}$ acts transitively on $\grp{h}y \cap A$ for every $y \in X$ such that $\grp{h}y \cap A$ is nonempty.  Given $y \in X$ such that $\grp{h}y$ is infinite, then $h$ acts minimally on $\overline{\grp{h}y}$, by Theorem~\ref{thm:minimal_periodic}; in particular, $\overline{\grp{h}y}$ is a perfect space, so $\grp{h}y \cap A$ is either empty or infinite.  Thus if $x \in Y$, then
\[
o^+_g(h) \ge o^+_g(h_A) = o^+_{g_A}(h_A)
\]
with $o^+_g(h) = o^+_g(h_A)$ if and only if $A$ intersects every infinite $h$-orbit on $\grp{g}x$.

This completes the proof of the proposition in the case that $h \in \Fgpp{g}$.  By Theorem~\ref{thm:positive_conjugation}(ii) and the invariance of $o^x_g(h)$ under conjugation of $h$, we immediately deduce the proposition for $h \in \Fgpnn{g}$.

For the general case, write $h_+$ and $h_-$ for the restrictions of $h$ to its positive and negative part respectively.  We now observe that the number of positive $h$-orbits on $\grp{g}x$ for $x \in X$ is exactly $o^+_{g}(h_+)$, and the number of negative $h$-orbits is exactly $o^+_{g\inv}(h_-)$.  Now define $o^+_g(h) := o^+_{g}(h_+)$ and $o^-_{g}(h) := o^+_{g\inv}(h_-)$.  Since $h_+ \in \Fgpnn{g}$ and $h_- \in \Fgpnn{g\inv}$, the desired conclusions all follow from the positive case of the proposition.
\end{proof}

With respect to a given compact minimal system $(X,g)$, and given $h \in \Fgp{g}$, we define the \defbold{positive orbit number} of $h$ to be $o^+_g(h)$, the \defbold{negative orbit number} to be $o^-_g(h)$, and the \defbold{orbit number} to be $o_g(h) := o^+_g(h) + o^-_g(h)$.

Recall now the index map, which is the unique homomorphism $I: \Fgp{g} \rightarrow \bZ$ such that $I(g)=1$.  Unsurprisingly given its uniqueness, it has several equivalent definitions; we recall here the definition of \v{S}t\v{e}p\'{a}nek--Rubin, stated more explicitly in the survey article \cite{Cor} of Y. Cornulier.

\begin{defn}
Let $X$ be a compact Hausdorff space, let $g$ be a homeomorphism of $X$ and let $x \in X$ be such that the $g$-orbit of $x$ is infinite.  Given $h \in \Homeo(X)$, write $h^{\bN}x$ for the set $\{h^nx \mid n \ge 0\}$.  Define a map $I^x_g: \Fgp{g} \rightarrow \bZ$ by setting
\[
I^x_g(h) = |h\inv g^{\bN}x \smallsetminus g^{\bN}x| - |g^{\bN}x \smallsetminus h\inv g^{\bN}x|.
\]
In words, $I^x_g(h)$ is the net transfer of points into the set $g^{\bN}x$ via the application of $h$.
\end{defn}

\begin{lem}[{\cite[Proposition~3.3.1]{Cor}}]\label{lem:Cor_index}
The map $I^x_g$ is a group homomorphism that does not depend on the choice of $x$ within its $g$-orbit; one has $I^x_g(g)=1$ (and hence $I^x_g$ is surjective).  Moreover, if $\mathrm{Hom}(\Fgp{g},\bZ)$ is equipped with the pointwise convergence topology, then the map $x \mapsto I^x_g$, defined on the union of the infinite orbits of $g$, is continuous.
\end{lem}

In particular, if $g$ acts minimally then $I^x_g = I^y_g$ for all $x,y \in X$, and so we drop the superscript and write $I_g$.  We will also drop the subscript and write $I$ for the index map when $g$ is clear from context.

We can easily express this version of the index map using the positive and negative orbit numbers.

\begin{prop}\label{prop:index_sum}
Let $(X,g)$ be a compact minimal system.  Then $I_g(h) = o^+_g(h) - o^-_g(h)$ for all $h \in \Fgp{g}$.
\end{prop}

\begin{proof}
Let $h \in \Fgp{g}$.  It is clear that the functions $o^\pm_g$ and $I_g$ are additive with respect to decompositions of the input $h$ into parts with disjoint clopen support and that $o^+_g(h) = o^-_g(h) = I_g(h) = 0$ if $h$ has finite order.  Thus by Theorem~\ref{thm:sign}, we reduce to the case that $h$ is positive or negative.  It is also clear that replacing $h$ with $h\inv$ reverses the roles of $o^+_g$ and $o^-_g$, and that we have $I_g(h\inv) = -I_g(h)$; so we only need to consider $h \in \Fgpnn{g}$.

By Proposition~\ref{prop:orbit_number}, $o^\pm_g$ is invariant under conjugation in $\Fgp{g}$; the same is true of $I_g$, since it is a homomorphism to an abelian group.  So we are free to replace $h$ with a conjugate of $h$.  By Theorem~\ref{thm:positive_conjugation}, we may therefore assume $h \in \Fgpp{g}$.

Given $h \in \Fgpp{g}$, for each infinite $h$-orbit $\Omega \subseteq \grp{g}x$, there is exactly one $y \in \Omega \smallsetminus g^{\bN}x$ such that $hy \in g^{\bN}x$, whereas for all $y \in g^{\bN}x$, we also have $hy \in g^{\bN}x$.  Thus 
\[
|h\inv g^{\bN}x \smallsetminus g^{\bN}x| = o^+_g(h) \quad \text{ and } \quad |g^{\bN}x \smallsetminus h\inv g^{\bN}x| = 0 = o^-_g(h).
\]
In particular, $I_g(h) = o^+_g(h) - o^-_g(h)$, and we are done.
\end{proof}

Theorem~\ref{thm:index} now follows immediately from Propositions~\ref{prop:orbit_number} and~\ref{prop:index_sum}.

\begin{rmk}
Although their difference is additive, individually the orbit numbers $o^+$ and $o^-$ do not behave so well under composition of elements of $\Fgp{g}$.  Take $(X,g)$ to be the binary odometer.  For $i \in \{0,1\}$ let $h_i$ act as $g^3$ on strings beginning with $i$ and $g\inv$ on strings not beginning with $i$.  Then it is easy to see that $o^+(h_0) = o^+(h_1)=1$ and $o^-(h_0) = o^-(h_1)= 0$, but $o^+(h_0h_1) = 3$ and $o^-(h_0h_1) = 1$.
\end{rmk}

\section{A normal form in terms of induced transformations}\label{sec:normal_form:induced}

We now move onto the proof of Theorem~\ref{thm:normal_form}.

\begin{proof}[Proof of Theorem~\ref{thm:normal_form}]
Let $(X,g)$ be a compact minimal system and let $I = I_g$.  We will use the following properties of $I$, which are clear from Lemma~\ref{lem:Cor_index} and Proposition~\ref{prop:index_sum}: It is a homomorphism from $\Fgp{g}$ to $\bZ$; $I(g_A)=1$ for all $A \in \mc{CO}^*(X)$; and $I(h) \ge 0$ for all $h \in \Fgpp{g}$ with equality if and only if $h = 1$.

Let $h \in \Fgp{g}$.  Set $r = \min_{x \in X}c_{g,h}(x)$.  We see that $\min_{x \in X}c_{g,hg^{-r}}(x) = 0$; thus $hg^{-r} \in \Fgpp{g}$ and the support $A_1$ of $hg^{-r}$ is a proper clopen subset of $X$.  Set 
\[
A_{i+1} = \supp(h_i) \text{ where } h_i = hg^{-r}g\inv_{A_1} g\inv_{A_2} \dots g\inv_{A_i}.
\]
Set $n = I(hg^{-r})$; since $hg^{-r} \in \Fgpp{g}$ we have $n \ge 0$.  For $i \le n$, by repeated application of Proposition~\ref{prop:po:cocycle}(iv) we see that $h_i \in \Fgpp{g}$; we also see that $I(h_i) = n-i$, so $h_i \neq 1$ for $i < n$ but $h_n = 1$.  Thus
\[
h = g_{A_n} \dots g_{A_2} g_{A_1}g^r.
\]
The construction also ensures that for all $1 \le i \le n$, the set $A_i$ is a nonempty clopen subset of $X$ such that $A_{i+1} \subseteq A_i$.

We must now show the uniqueness of the given expression for $h$.  Suppose that $h = g_{B_m} \dots g_{B_2} g_{B_1}g^s$, where the $B_i$ are proper nonempty clopen subsets of $X$ such that $B_{i+1} \subseteq B_i$ for all $i$.  Then $hg^{-s} \in \Fgpp{g}$ and is supported on $B_1 \neq X$, so $\min_{x \in X}c_{g,hg^{-s}} = 0$; consequently $\min_{x \in X}c_{g,h}(x) = s$, in other words $s = r$.  From now on we can consider expressions for $hg^{-r}$ rather than $h$, and so we may assume that $r = s = 0$.  Since $I$ is a homomorphism and every induced transformation of $g$ belongs to $(I)\inv(1)$, we see that $m= I(h) = n$.
 
Now proceed by induction on $|I(h)|$.  From the two expressions we have for $h$, easy calculations show that
\[
x \in A_1 \Leftrightarrow c_{g,h}(x) > 0 \Leftrightarrow x \in B_1,
\]
so $A_1 = B_1$.  We deduce that
\[
g_{A_n} \dots g_{A_2} = g_{B_n} \dots g_{B_2},
\]
and by the inductive hypothesis, we have $A_i = B_i$ for $2 \le i \le m$.

Lastly, note that $h \in \Fgpp{g}$ if and only if $\min_{x \in X}c_{g,h}(x) \ge 0$; as we have seen in the construction of the normal form, in fact $\min_{x \in X}c_{g,h}(x) = r$.
\end{proof}

If $X$ is zero-dimensional, we have another description of the topological conjugacy class of the compact minimal system $(X,g)$ directly in terms of the monoid structure of $\Fgpp{g}$.  Since the monoid $\Fgpp{g}$ has a more rigid structure than the group $\Fgp{g}$, this is more straightforward than the recovery of the flip-conjugacy class of $(X,g)$ from $\Fgp{g}$ as given by \cite{Rubin} or \cite{GPS2}.

\begin{prop}\label{prop:positive:reconstruction}
Let $(X,g)$ be a compact zero-dimensional minimal system and consider $\Fgpp{g}$ as a monoid.  Let $\mc{A}$ be the set of elements $h$ of $\Fgpp{g}$ that cannot be decomposed as $h=ab$ for $a,b \in \Fgpp{g} \smallsetminus \{1\}$.  Define the \defbold{support order} on $\mc{A}$ by setting $h_1 \le_{\supp} h_2$ if 
\[
\CC_{\mc{A} }(h_2) \subseteq \CC_{\mc{A} }(h_1) \cup \{h_2\}, \text{ where } \CC_{\mc{A} }(h) := \{a \in \mc{A}  \mid ah = ha \}.
\]
Then $(\mc{A} ,\le_{\supp})$ is a Boolean algebra with least element $\id_X$ and greatest element $g$, on which $\grp{g}$ acts by conjugation.  Moreover, $\mc{A}$ is $\grp{g}$-equivariantly isomorphic to the set $\mc{CO}(X)$ of compact open subspaces of $X$, ordered by inclusion.
\end{prop}

\begin{proof}
By Theorem~\ref{thm:normal_form} we see that $\mc{A} = \{g_A \mid A \in \mc{CO}(X)\}$.  By Lemma~\ref{lem:induced:conjugate}, given $A \in \mc{CO}(X)$ then $gg_Ag\inv = g_{gA}$, so $\grp{g}$ acts on $\mc{A}$ by conjugation.  It now suffices to show that $\le_{\supp}$ corresponds to the inclusion order on clopen sets in the obvious manner: that is, given $A,B \in \mc{CO}(X)$, that $g_A \le_{\supp} g_B$ if and only if $A \subseteq B$.

Suppose that $A \smallsetminus B$ is nonempty.  Then there is a proper nonempty clopen subset $A'$ of $A \smallsetminus B$.  Since $g_A$ acts minimally on $A$, it follows that $g_AA' \neq A'$; since $A' = \supp(g_{A'})$, this means that $g_A$ and $g_{A'}$ do not commute.  However, $g_{A'}$ does commute with $g_B$, since $A'$ and $B$ are disjoint.  Thus $g_A \not\le_{\supp} g_B$, proving that $g_A \le_{\supp} g_B \Rightarrow A \subseteq B$.

Now suppose instead that $A \subseteq B$ and suppose $B' \in \mc{CO}(X) \smallsetminus \{B\}$ is such that $g_B$ and $g_{B'}$ commute.  Then $g_B$ and $g_{B'}$ both preserve the set $B \cap B'$; since $B \cap B'$ cannot equal both $B$ and $B'$, by the minimality of $g_B$ and $g_{B'}$ we must have $B \cap B' = \emptyset$.  But then $A \cap B' = \emptyset$, so $g_A$ and $g_{B'}$ also commute.  Thus $g_A \le_{\supp} g_B$, proving that $A \subseteq B \Rightarrow g_A \le_{\supp} g_B$.
\end{proof}

By Stone duality, any compact zero-dimensional space can be recovered from its Boolean algebra of clopen subsets.  The following corollary is thus immediate.

\begin{cor}
Let $(X_1,g_1)$ and $(X_2,g_2)$ be compact zero-dimensional minimal systems, and suppose that $\theta: \Fgpp{g_1} \rightarrow \Fgpp{g_2}$ is an isomorphism of monoids.  Then $\theta(g_1) = g_2$ and there is a homeomorphism $\kappa: X_1 \rightarrow X_2$ such that $\kappa(hx) = \theta(h)(\kappa x)$ for all $x \in X_1$ and $h \in \Fgpp{g_1}$.
\end{cor}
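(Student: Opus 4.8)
The plan is to combine Proposition~\ref{prop:positive:reconstruction} with Stone duality; the only step that requires genuine thought is transporting the $\grp{g}$-action across $\theta$.

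First I would note that $\theta$ carries the set $\mc{A}_1$ of irreducible elements of $\Fgpp{g_1}$ together with $\id_{X_1}$ onto the analogous set $\mc{A}_2$ for $g_2$, since both irreducibility and being the identity are monoid-theoretic properties preserved by a monoid isomorphism. The support order is likewise defined purely from the monoid structure (through centralizers inside $\mc{A}$), so $a$ commutes with $b$ iff $\theta(a)$ commutes with $\theta(b)$, and hence $\theta$ restricts to an isomorphism $(\mc{A}_1,\le_{\supp}) \to (\mc{A}_2,\le_{\supp})$ of ordered sets. By Proposition~\ref{prop:positive:reconstruction} these are Boolean algebras with greatest elements $g_1$ and $g_2$, whence $\theta(g_1) = g_2$; and composing with the $\grp{g_i}$-equivariant isomorphisms $(\mc{A}_i,\le_{\supp}) \cong (\mc{CO}(X_i),\subseteq)$, $g_A \leftrightarrow A$, of that proposition, I obtain a Boolean-algebra isomorphism $\phi: \mc{CO}(X_1) \to \mc{CO}(X_2)$ with $\theta(g_A) = g_{\phi(A)}$ for all clopen $A \subseteq X_1$.

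By Stone duality there is then a unique homeomorphism $\kappa: X_1 \to X_2$ with $\kappa(A) = \phi(A)$ for every clopen $A$, so $\theta(g_A) = g_{\kappa A}$. The crux is to show that $\kappa$ intertwines $g_1$ and $g_2$. Here I would use that the conjugation relation $g_1 g_A = g_{g_1 A}\, g_1$ --- an instance of Lemma~\ref{lem:induced:conjugate} --- is an identity \emph{inside} the monoid $\Fgpp{g_1}$, even though conjugation itself is performed in $\Homeo(X_1)$. Applying $\theta$ gives $g_2 g_{\kappa A} = g_{\kappa(g_1 A)}\, g_2$, while Lemma~\ref{lem:induced:conjugate} applied to $g_2$ gives $g_2 g_{\kappa A} = g_{g_2(\kappa A)}\, g_2$; cancelling $g_2$ in $\Homeo(X_2)$ and using that $B \mapsto g_B$ is injective on clopen sets yields $\kappa(g_1 A) = g_2(\kappa A)$ for all $A$, and faithfulness of Stone duality then forces $\kappa \circ g_1 = g_2 \circ \kappa$ as homeomorphisms $X_1 \to X_2$.

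It remains to upgrade this to the asserted equivariance for all $h \in \Fgpp{g_1}$. From $\kappa g_1 = g_2 \kappa$ one checks directly that $\kappa g_A = g_{\kappa A}\,\kappa = \theta(g_A)\,\kappa$ for each clopen $A$: both sides send $\kappa x$ to $\kappa x$ when $x \notin A$, and for $x \in A$ the least $t \ge 1$ with $g_1^t x \in A$ equals the least $t \ge 1$ with $g_2^t(\kappa x) \in \kappa A$, because $\kappa$ maps the $g_1$-orbit of $x$ onto the $g_2$-orbit of $\kappa x$ compatibly with the indexing and maps $A$ onto $\kappa A$. Since every element of $\Fgpp{g_1}$ is a product of induced transformations by Proposition~\ref{prop:induced_generators}(i), and $\theta$ is a homomorphism, a one-line induction on the number of factors gives $\kappa(hx) = \theta(h)(\kappa x)$ for all $h \in \Fgpp{g_1}$ and $x \in X_1$. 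I expect the equivariance step (third paragraph) to be the main obstacle: one has to realise the ``external'' conjugation action appearing in Proposition~\ref{prop:positive:reconstruction} as a relation internal to the monoid $\Fgpp{g_1}$ before $\theta$ can be brought to bear on it; the remaining verifications are routine bookkeeping.
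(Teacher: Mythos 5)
Your proposal is correct and follows exactly the route the paper intends: the paper derives this corollary as "immediate" from Proposition~\ref{prop:positive:reconstruction} together with Stone duality, and your argument is precisely the fleshing-out of that derivation, with the key observation (that the conjugation relation $g_1 g_A = g_{g_1A}g_1$ from Lemma~\ref{lem:induced:conjugate} is an identity internal to the monoid and hence transported by $\theta$) correctly identified and handled.
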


Another consequence of our normal form for strongly positive elements is that it leads to a monoid presentation of $\Fgpp{g}$ in terms of induced transformations.

\begin{prop}\label{prop:monoid_presentation}
Let $(X,g)$ be a compact minimal system.  Then for all $A,B \in \mc{CO}^*(X)$, we have
\[
g_Ag_B = g_{A \ast B}g_{A \cup B},
\]
where $A \ast B$ is the following nonempty clopen subset of $A \cup B$:
\[
A \ast B := (A \cap g_{A \cup B}(B)) \cup ((B \smallsetminus A) \cap g_{A \cup B}(A \smallsetminus B)).
\]
With respect to the generating set $\{g_A \mid A \in \mc{CO}^*(X)\}$, $\Fgpp{g}$ has the monoid presentation
\[
\Fgpp{g} = \langle \{g_A \mid A \in \mc{CO}^*(X)\} \mid g_Ag_B = g_{A \ast B}g_{A \cup B} \rangle.
\]
\end{prop}

\begin{proof}
Note that $g_X = g$.  Given $A,B \in \mc{CO}^*(X)$, we see that $h = g_Ag_B$ is a strongly positive element such that $I(h)=2$ and $\supp(h) = A \cup B$.  The normal form of $h$ is therefore $h = g_{C}g_{A \cup B}$, for some nonempty clopen subset $C$ of $A \cup B$ depending on $A$ and $B$.

We can calculate $C$ as the support of $k = g_Ag_Bg\inv_{A \cup B}$, as follows.  Set
\[
D_1 = A \cap B \cap g_{A \cup B}(B); \; D_2 = (A \smallsetminus B) \cap g_{A \cup B}(B); \; D_3 = (B \smallsetminus A) \cap g_{A \cup B}(A \smallsetminus B);
\]
note that $D_1 \cup D_2 \cup D_3 = A \ast B$.
Take $x \in A \cup B$ and let $y = g\inv_{A \cup B}x$.  If $y \in A \smallsetminus B$ and $x \in A$, then $g_By = y = g\inv_Ax$, and hence $kx=x$; similarly $kx=x$ in the case that $y \in B$ and $x \in B \smallsetminus A$.  Thus $k$ fixes all points outside of $A \ast B$.  If $x \in D_1$, then $x,y \in B$, so $g_Bg\inv_{A \cup B}$ fixes $x$, and hence $kx = g_Ax \neq x$.  If $x \in D_2$ then $y \in B$ but $x \not\in B$, so $g_By >_g x$, and hence $kx >_g x$.  If $x \in D_3$, then $kx = g_Ay \in A$; since $x \not\in A$ we have $kx \neq x$.  Thus the support of $k$ is exactly $A \ast B$, showing that $C = A \ast B$ as claimed.

Let $\Gamma$ be the monoid with presentation
\[
\Gamma = \langle \{\gamma_A \mid A \in \mc{CO}^*(X)\} \mid \gamma_A\gamma_B = \gamma_{A \ast B}\gamma_{A \cup B} \rangle;
\]
since the corresponding relations are satisfied in $\Fgpp{g}$, we have a monoid homomorphism $\rho: \Gamma \rightarrow \Fgpp{g}$ such that $\rho(\gamma_A) = g_A$.  Since $\Fgpp{g}$ is generated by induced transformations, $\rho$ is surjective.  Now consider a word $w$ in the alphabet $\{\gamma_A \mid A \in \mc{CO}^*(X)\}$.  Since $A \ast B \subseteq A \cup B$ for all $A,B \in \mc{CO}^*(X)$, we can use the relations of $\Gamma$ to obtain a word $w'$ representing the same element of $\Gamma$ as $w$, of the form $w' = \gamma_{B_{n}} \dots \gamma_{B_2} \gamma_{B_1}$, where now $B_i \in \mc{CO}^*(X)$ such that $B_{i+1} \subseteq B_i$ for all $i$.  We declare the result of this process to be a reduced word.

By Theorem~\ref{thm:normal_form}(i), $\rho$ sends distinct reduced words to distinct elements of $\Fgpp{g}$, so $\rho$ is injective and hence an isomorphism.
\end{proof}

Given Proposition~\ref{prop:po:cocycle}(ii), we see that the monoid presentation of $\Fgpp{g}$ is also a group presentation for $\Fgp{g}$.

\section{Strongly \ppm homeomorphisms}\label{sec:ppm}
A zero-dimensional space $X$ is \defbold{h-homogeneous} if every nonempty clopen subset of $X$ is homeomorphic to $X$ itself; see \cite{SR}\footnote{The theme of \cite{SR} is homogeneous Boolean algebras; the Stone space of such an algebra is then an example of an h-homogeneous space.} and \cite{Terada} for some properties and examples.  A \defbold{minimal h-homogeneous Stone space} is an infinite topological space $X$ that is compact, Hausdorff and h-homogeneous, and admits a minimal homeomorphism; the principal example we are interested in is when $X$ is the Cantor space, but the arguments apply equally well to other examples.

Say that a homeomorphism $h$ of a compact Hausdorff space $X$ is \defbold{piecewise a power of a minimal homeomorphism} (\ppm) if $X = \emptyset$ or there exists $g \in \Homeo(X)$ such that $g$ is minimal on $X$ and $h \in \Fgp{g}$.  Fix such a homeomorphism $h \in \Homeo(X)$ for $X \neq \emptyset$ and consider the minimal homeomorphisms $g \in \Homeo(X)$ such that $h \in \Fgp{g}$.  We can take the orbit number $o_g(h)$ as a measure of the `efficiency' with which $g$ witnesses that $h$ is \ppm; define $o_{\min}(h)$ to be $0$ if $X$ is finite, and otherwise the smallest value of $o_g(h)$, as $g$ ranges over all minimal homeomorphisms of $X$ such that $h \in \Fgp{g}$.  Letting $m(h)$ be the number of distinct infinite minimal orbit closures $X_1,\dots,X_m$ of $h$ on $X$, we see that $o_g(h) = \sum^{m}_{i=1}o_g(h_i)$, where $h_i$ is the restriction of $h$ to $X_i$, and $o_g(h_i) \ge 1$ for each such restriction; hence $o_{\min}(h) \ge m(h)$.  We say that $h$ is \defbold{strongly \ppm} if $o_{\min}(h) = m(h)$.  In this section, we characterize the strongly \ppm homeomorphisms of minimal h-homogeneous Stone spaces.

Not all finite-order homeomorphisms $h$ of the Cantor space are \ppm, because the set of fixed points of $h$ can be any closed set, not necessarily an open set.  Given $X = \{0,1\}^*$ and a closed but not open subset $Y$ of $X$, there is an infinite sequence of finite binary strings $(w_i)_{i \ge 0}$ such that $X \smallsetminus Y$ is the disjoint union of the sets $w_iX$, and writing $\overline{e}:=1-e$ for $e \in \{0,1\}$, there is a homeomorphism $h$ of $X$ of order $2$ such that $h$ fixes $Y$ pointwise and $h(w_iev) = w_i\overline{e}v$ for all $i \ge 0$, $e \in \{0,1\}$ and $v \in X$.  Similar examples can be constructed for any minimal h-homogeneous Stone space.

The next proposition reduces the study of strongly \ppm homeomorphisms of minimal h-homogeneous Stone spaces to the aperiodic case: in particular, it implies that $h$ is strongly \ppm in the case that $h$ has a minimal-periodic partition and $m(h) \le 1$.

\begin{prop}\label{prop:remove_periodic}
Let $X$ be a minimal h-homogeneous Stone space and let $h \in \Homeo(X)$.  Suppose that $X_\per$ is clopen and $(X_\per,h)$ is strongly periodic, and that $(X_{\aper},h)$ is \ppm (allowing $X_\aper=\emptyset$).  Then $h$ is \ppm and $o_{\min}(h) = o_{\min}(h_\aper)$.
\end{prop}

\begin{proof}
As a warm-up for the general argument, let us first consider the case that every orbit of $h$ has the same finite size $n \ge 1$.  Then we can partition $X$ into clopen sets $X(i)$ where $0 \le i < n$, $hX(i) = X(i+1)$ and $hX(n-1) = X(0)$.  (An equivalent construction implicitly occurs in \cite{Krieger}; for more detail see for example \cite[Lemma~2.10]{GR}.)  The set $X(0)$ is homeomorphic to $X$ and hence there is a minimal $g' \in \Homeo(X(0))$.  We then define a homeomorphism $g$ on $X$, as follows:
\[
gx = 
\begin{cases}
h x &\mbox{if} \;  x \in X(i),\; 0 \le i < n-1\\
g' h^{1-n}x &\mbox{if} \;  x \in X(n-1).
\end{cases}.
\]
We then see that $g^n$ acts as the minimal homeomorphism $g'$ on $X(0)$, so each forward orbit of $g$ has dense intersection with $X(0)$.  Using the other powers of $g$ it follows that the orbits of $g$ on $X$ are dense, that is, $(X,g)$ is a compact minimal system.  From the construction it is clear that $h \in \Fgp{g}$; since $h$ has finite order, $o_g(h)=0$.

We now turn to the general case of the proposition.  We know that either $X_\aper$ is empty or that $h_{X_\aper} \in \Fgp{g'}$, where $g'$ is a minimal homeomorphism of $X_\aper$.  Partition $X_\per$ into clopen sets $X_\per(n,i)$, where $0 \le i < n$, $hX_\per(n,i) = X_\per(n,i+1)$ and $hX_\per(n,n-1) = X_\per(n,0)$.  These sets are nonempty for only finitely many $n \in \bN$; let us say that
\[
\{ X_\per(n,i) \mid n \in \bN, 0 \le i < n, X_\per(n,i) \neq \emptyset\} = \{Y_1,\dots,Y_l\}.
\]
We arrange the sets $Y_j$ so that if $Y_j = X_\per(n,i)$ and $Y_{j'} = X_\per(n',i')$, then $j < j'$ if and only if either $n < n'$ or $n=n'$ and $i < i'$.

Since $X$ is h-homogeneous, the sets $Y_1,\dots,Y_l$ are all homeomorphic to $X$; if $X_\aper$ is nonempty then $X_\aper$ is homeomorphic to $Y_1$.  If $X_\aper$ is nonempty, write $X_\aper = Y_0$ and set $\epsilon=0$; if $X_\aper$ is empty, set $\epsilon=1$ and choose a minimal $g' \in \Homeo(Y_1)$.  Choose homeomorphisms $t_j: Y_j \rightarrow Y_{j+1}$ for $\epsilon \le j < l$.  In the case that $Y_j = X_\per(n,i)$ and $Y_{j+1} = X_\per(n,i+1)$ for some $n$ and $i$, then we define $t_j$ by setting $t_jx = hx$ for all $x \in Y_j$; otherwise choose an arbitrary homeomorphism.

Now define a homeomorphism $g$ on $X$, as follows:
\[
gx = 
\begin{cases}
t_j x &\mbox{if} \;  x \in Y_j, \; \epsilon \le j < l\\
g' t\inv_\epsilon t\inv_{\epsilon+1} \dots t\inv_{l-1}x &\mbox{if} \;  x \in Y_l
\end{cases}.
\]

Observe that for all $x \in Y_{\epsilon}$, we have $g^{l+1-\epsilon}x = g'x$, so $g'_{Y_{\epsilon}} \in \Fgp{g}$ and in particular $h_{\aper} \in \Fgp{g}$.  Since $g'$ is a minimal homeomorphism of $Y_{\epsilon}$, it follows that each forward $g$-orbit has dense intersection with $Y_{\epsilon}$, and then the other powers of $g$ ensure that $g$ acts minimally on the whole of $X$.  When $x \in X_\per(n,i)$ for $0 \le i < n-1$, then $hx = gx$; when $x \in X_\per(n,n-1)$, then $hx = g^{1-n}x$.  Thus $h \in \Fgp{g}$, showing that $h$ is p.p.m.  The construction ensures that $o_g(h) = o_{g'}(h_\aper)$, and hence $o_{\min}(h) \le o_{\min}(h_\aper)$.  On the other hand if $k \in \Homeo(X)$ is any minimal homeomorphism such that $h \in \Fgp{k}$, then $h_{\aper} \in \Fgp{k_A}$ where $A = X^{\grp{h}}_{\aper}$ and $o_{k}(h) = o_{k_A}(h_{\aper}$, so $o_{\min}(h) \ge o_{\min}(h_\aper)$.
\end{proof}

A minimal homeomorphism is clearly Kakutani equivalent to its induced transformations on nonempty clopen sets.  In particular, if $(X,g)$ is a compact minimal system and $\{X_1,\dots,X_n\}$ is a partition of $X$ into nonempty clopen sets, then the compact minimal systems $(X_i,g_{X_i})$ lie in a single Kakutani equivalence class.  In fact, all finite tuples of Kakutani equivalent systems arise in this way.

\begin{prop}\label{prop:Kakutani_weld}
Let $n$ be a natural number and let $(X_i,g_i)_{1 \le i \le n}$ be an $n$-tuple of Kakutani equivalent compact minimal systems.  Then there is a minimal homeomorphism $g$ of the disjoint union $X = \bigsqcup^n_{i=1}X_i$, such that $g_{X_i} = g_i$ for $1 \le i \le n$.
\end{prop}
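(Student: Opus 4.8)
The plan is to induct on $n$, reduce to the case $n=2$, and then build $g$ by explicitly ``welding'' the two systems along a Kakutani equivalence.

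For the inductive step I will assume the statement for $n-1$ systems. Given $(X_i,g_i)_{1\le i\le n}$ Kakutani equivalent, the inductive hypothesis applied to $(X_1,g_1),\dots,(X_{n-1},g_{n-1})$ yields a minimal homeomorphism $g'$ of $X':=\bigsqcup_{i=1}^{n-1}X_i$ with $g'_{X_i}=g_i$ for $i<n$. Since $X_1$ is clopen in $X'$ and $g'_{X_1}=g_1$ (and $(g_1)_{X_1}=g_1$), the identity map of $X_1$ is a Kakutani equivalence of $(X',g')$ with $(X_1,g_1)$; combining this with the hypothesis that $(X_1,g_1)$ is Kakutani equivalent to $(X_n,g_n)$, Lemma~\ref{lem:Kakutani_equivalence} shows $(X',g')$ and $(X_n,g_n)$ are Kakutani equivalent. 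Granting the case $n=2$, I then get a minimal homeomorphism $g$ of $X'\sqcup X_n=X$ with $g_{X'}=g'$ and $g_{X_n}=g_n$, and transitivity of inducing gives $g_{X_i}=(g_{X'})_{X_i}=g'_{X_i}=g_i$ for all $i<n$. (The case $n=1$ is trivial.)

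The work is thus in the case $n=2$. I will fix a Kakutani equivalence $\kappa\colon Y_1\to Y_2$, with $Y_i\subseteq X_i$ nonempty clopen and $\kappa\circ(g_1)_{Y_1}=(g_2)_{Y_2}\circ\kappa$, and set $D_1:=Y_1\subseteq X_1$ and $R_2:=g_2\inv(Y_2)\subseteq X_2$, so that both are nonempty clopen and $g_2(R_2)=Y_2$. The homeomorphism $g$ of $X=X_1\sqcup X_2$ will be built to follow $g_1$ inside $X_1$, leaving $X_1$ for $X_2$ only at the points of $D_1$, and to follow $g_2$ inside $X_2$, leaving $X_2$ for $X_1$ only at the points of $R_2$: concretely, $gx=g_1x$ for $x\in X_1\smallsetminus D_1$, $gx=\alpha(x)$ for $x\in D_1$, $gx=g_2x$ for $x\in X_2\smallsetminus R_2$, and $gx=\beta(x)$ for $x\in R_2$, where $\alpha\colon D_1\to Y_2$ and $\beta\colon R_2\to g_1(D_1)$ are homeomorphisms still to be fixed. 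For any such $\alpha,\beta$ this map is a homeomorphism of $X$, since it carries the clopen partition $(X_1\smallsetminus D_1,\,R_2,\,X_2\smallsetminus R_2,\,D_1)$ of $X$ onto the clopen partition $(X_1\smallsetminus g_1(D_1),\,g_1(D_1),\,X_2\smallsetminus Y_2,\,Y_2)$. To compute $g_{X_1}$ and $g_{X_2}$ I will use the Rokhlin-tower decomposition $X_2=\bigsqcup_{r\in R_2}\{g_2r,g_2^2r,\dots,g_2^{s(r)}r\}$, where $s(r)$ is the first-return time of $r$ to $R_2$ under $g_2$, together with the analogous decomposition of $X_1$ over $D_1$: these show that each excursion of the $g$-orbit into $X_2$ is exactly one $g_2$-sojourn from a point of $g_2(R_2)=Y_2$ to a point of $R_2$, and dually for $X_1$. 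Tracking the endpoints of these sojourns, $g_{X_1}=g_1$ and $g_{X_2}=g_2$ will hold precisely when $\beta$ is a certain explicit homeomorphism determined by $\alpha$ and, in addition, $\alpha$ conjugates $(g_1)_{Y_1}$ to the induced transformation of $g_2$ on $Y_2=g_2(R_2)$, which is $(g_2)_{Y_2}$. Taking $\alpha=\kappa$ and $\beta$ as prescribed meets both requirements; this is exactly where the Kakutani-equivalence hypothesis is spent.

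Finally I will check that $g$ is minimal. If $K\subseteq X$ is nonempty, closed and $g$-invariant, choose $i$ with $K\cap X_i\neq\emptyset$; then $K\cap X_i$ is closed and invariant under both $g_{X_i}=g_i$ and $(g_{X_i})\inv=(g\inv)_{X_i}$ (Lemma~\ref{lem:induced}(i)), so $K\cap X_i=X_i$ by minimality of $g_i$. As $\emptyset\neq D_1\subseteq X_1$ and $g(D_1)=Y_2\subseteq X_2$, the set $K$ then meets both $X_1$ and $X_2$, forcing $K=X$; by Lemma~\ref{lem:minimal_forwards} this shows $g$ is minimal. I expect the main obstacle to be the bookkeeping in the $n=2$ construction — arranging the two gluing maps $\alpha,\beta$ so that the first returns of $g$ to $X_1$ and to $X_2$ simultaneously reproduce $g_1$ and $g_2$ — and the content of that step is precisely the Rokhlin-tower computation, which is what forces the Kakutani-equivalence hypothesis.
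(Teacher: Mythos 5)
Your proposal is correct and is essentially the paper's argument: the same inductive reduction to $n=2$ (via transitivity of Kakutani equivalence and of inducing), followed by the same welding construction and the same minimality check. The only cosmetic difference is that your handoff sets are shifted by one application of $g_2$ (you enter $X_2$ at $Y_2$ and exit at $g_2^{-1}Y_2$, the paper enters at $g_2Y_2$ and exits at $Y_2$), and the ``prescribed'' $\beta$ your first-return computation forces is $\beta = g_1(g_1)_{Y_1}^{-1}\kappa^{-1}g_2$, which is exactly the paper's transition map precomposed with $g_2$.
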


\begin{proof}
Suppose $n \ge 3$ and that the proposition is true for all smaller choices of $n$.  Then there is a minimal homeomorphism $g'$ of $X' = \bigsqcup^{n-1}_{i=1}X_i$ such that $g'_{X_i} = g_i$ for $1 \le i \le n-1$.  Now $g'$ is Kakutani equivalent to $g_n$, so there is a minimal homeomorphism $g$ of $X = X' \sqcup X_n$ such that $g_{X'} = g'$ and $g_{X_n} = g_n$.  We then see that $g_{X_i} = g_i$ for $1 \le i \le n$.  Thus it suffices to prove the result for $n \le 2$.  Since the case $n=1$ is trivial, we assume $n=2$.

Choose a Kakutani equivalence $\kappa: Y_1 \rightarrow Y_2$ of $(g_1,g_2)$, where $Y_i$ is a nonempty clopen subset of $X_i$.  By restricting $\kappa$, we may ensure that $g_iY_i$ is disjoint from $Y_i$.  Now let $X = X_1 \sqcup X_2$ and define $g: X \rightarrow X$ by setting
\[
gx = 
\begin{cases}
g_2\kappa x &\mbox{if} \;  x \in Y_1\\
g_1(g_1)\inv_{Y_1} \kappa\inv x &\mbox{if} \;  x \in Y_2 \\
g_1x &\mbox{if} \; x \in X_1 \smallsetminus Y_1 \\
g_2x &\mbox{if} \; x \in X_2 \smallsetminus Y_2
\end{cases}.
\]
We now compare $g_{X_i}$ with $g_i$.  If $x \in X_i \smallsetminus Y_i$, then certainly $g_{X_i}x = g_ix$.  If $x \in Y_{3-i}$, then the sequence $gx,g^2x,\dots$ first passes through $g_{i}Y_{i}$, then follows the forward $g_{i}$-orbit until it reaches $Y_{i}$, then at the next step moves to $g_{3-i}Y_{3-i}$.  The first point on the forward $g_{i}$-orbit in $Y_i$ after visiting $g_{i}Y_{i}$ is given by applying $(g_{i})_{Y_{i}}g\inv_{i}$.

The result is as follows: if $x \in Y_1$, then
\begin{align*}
g_{X_1}x &= (g_1 (g_1)\inv_{Y_1} \kappa\inv)((g_2)_{Y_2}g\inv_2)g_2\kappa x = g_1  (g_1)\inv_{Y_1} \kappa\inv (g_2)_{Y_2} \kappa x = \\
 &=  g_1(g_1)\inv_{Y_1} \kappa\inv  \kappa (g_1)_{Y_1}  x  = g_1x.
\end{align*}
If $x \in Y_2$, then
\begin{align*}
g_{X_2}x &= (g_2 \kappa) ((g_1)_{Y_1}g\inv_1) (g_1 (g_1)\inv_{Y_1} \kappa\inv) x = g_2x.
\end{align*}
Thus for all $1 \le i \le 2$ and all $x \in X_i$, we have $g_{X_i}x = g_ix$, proving that $g_{X_i} = g_i$.  We see from the construction that $g$ is bijective and is a local homeomorphism, so $g$ is a homeomorphism.  By the minimality of $g_{X_i}$ on $X_i$, any nonempty closed $g$-invariant set contains $X_1$ or $X_2$; since $gY_1 = Y_2$, it follows that $g$ is in fact minimal.
\end{proof}

We can now characterize the strongly \ppm property in terms of flip Kakutani equivalence.

\begin{prop}\label{prop:Kakutani_weld:bis}
Let $X$ be a compact Hausdorff space and let $h \in \Homeo(X)$.  Suppose that $h$ admits a minimal-periodic partition, and that either $h$ is aperiodic or $X$ is a minimal h-homogeneous Stone space.  Then $h$ is strongly \ppm if and only if either $m(h)=0$, or $m(h) \ge 1$ and the spaces $(Y,h)$ for $Y$ an infinite orbit closure of $h$ all lie in a single flip Kakutani equivalence class.
\end{prop}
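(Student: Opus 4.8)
The plan is to prove both implications by assembling the orbit-counting results of \S\ref{sec:index} with the Kakutani-welding constructions above, using Proposition~\ref{prop:remove_periodic} to strip off the periodic part. Write $X_a(1),\dots,X_a(m)$ for the infinite orbit closures of $h$ (so $m=m(h)$); these are exactly the clopen, $\grp{h}$-invariant pieces of the aperiodic part $X_a$ furnished by the minimal-periodic partition, and each $(X_a(i),h)$ (meaning $(X_a(i),h\rest_{X_a(i)})$) is a compact minimal system. Recall that if $h\in\Fgp{g}$ with $g$ minimal, then $g_{X_a(i)}$ is a minimal homeomorphism of $X_a(i)$ and $h_{X_a(i)}\in\Fgp{g_{X_a(i)}}$. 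If $m(h)=0$ there is nothing to prove for the forward direction, and for the converse one applies Proposition~\ref{prop:remove_periodic} directly (in the subcase $h$ aperiodic, $m(h)=0$ forces $X=\emptyset$ and the claim is trivial); so assume from now on $m\ge1$.

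\emph{Strongly \ppm implies the Kakutani condition.} Suppose $o_{\min}(h)=m$ and fix a minimal $g$ with $h\in\Fgp{g}$ and $o_g(h)=m$. First I would establish the additivity identity $o_g(h)=\sum_{i=1}^m o_{g_{X_a(i)}}(h_{X_a(i)})$: for $x\in X$, each set $\grp{g}x\cap X_a(i)$ is a single $\grp{g_{X_a(i)}}$-orbit by Lemma~\ref{lem:induced}(iii), the infinite $\grp{h}$-orbits meeting it are precisely the $\grp{h_{X_a(i)}}$-orbits, and the periodic part contributes nothing. Each summand is $\ge1$ (as $h_{X_a(i)}$ is minimal), and there are $m$ of them summing to $m$, so $o_{g_{X_a(i)}}(h_{X_a(i)})=1$ for all $i$. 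By Corollary~\ref{cor:one_orbit} there is then a nonempty clopen $A_i\subseteq X_a(i)$ with $h_{A_i}=(h_{X_a(i)})_{A_i}\in\{g_{A_i},g\inv_{A_i}\}$ (using $(g_{X_a(i)})_{A_i}=g_{A_i}$). Now $(X_a(i),h)$ is Kakutani equivalent to $(A_i,h_{A_i})$, which is flip Kakutani equivalent to $(A_i,g_{A_i})$, which is in turn Kakutani equivalent to $(X,g)$; so by Lemma~\ref{lem:Kakutani_equivalence} all the $(X_a(i),h)$ lie in one flip Kakutani equivalence class.

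\emph{The Kakutani condition implies strongly \ppm.} Assume the $(X_a(i),h)$ all lie in a single flip Kakutani equivalence class. If $X_p\ne\emptyset$ then $h$ is not aperiodic, so by hypothesis $X$ is a generalized Cantor space, and by Proposition~\ref{prop:remove_periodic} it suffices to prove the claim for $h_a=h\rest_{X_a}$ (note the orbit closures of $h_a$ are the same $X_a(i)$); so we may assume $X=X_a=\bigsqcup_{i=1}^m X_a(i)$. Put $g_1=h\rest_{X_a(1)}$. For each $i$, flip Kakutani equivalence of $(X_a(i),h)$ with $(X_a(1),g_1)$ together with $(h\inv)_Y=(h_Y)\inv$ shows that at least one of $(X_a(i),h\rest_{X_a(i)})$, $(X_a(i),(h\rest_{X_a(i)})\inv)$ is Kakutani equivalent to $(X_a(1),g_1)$; let $g_i\in\{h\rest_{X_a(i)},(h\rest_{X_a(i)})\inv\}$ be such that $(X_a(i),g_i)$ is. By Lemma~\ref{lem:Kakutani_equivalence} the $g_i$ lie in one Kakutani equivalence class, so Proposition~\ref{prop:Kakutani_weld} yields a minimal $g\in\Homeo(X)$ with $g_{X_a(i)}=g_i$ for all $i$. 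Since $h$ restricted to the clopen $\grp{h}$-invariant set $X_a(i)$ equals $g_i^{\pm1}=(g_{X_a(i)})^{\pm1}$ there, and each $(g_{X_a(i)})^{\pm1}$ lies in $\Fgp{g}$, we get $h\in\Fgp{g}$; moreover, by the additivity identity, $o_g(h)=\sum_i o_{g_i}(g_i^{\pm1})=m$, because $\grp{g_i^{\pm1}}=\grp{g_i}$ acts transitively on its own orbits and hence $o_{g_i}(g_i^{\pm1})=1$. Thus $m(h)\le o_{\min}(h)\le o_g(h)=m$, so $h$ is strongly \ppm.

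The main obstacle is the orientation bookkeeping. Propositions~\ref{prop:Kakutani_weld} and \ref{prop:orbit_number} are phrased for genuine Kakutani equivalence and for the intrinsic orientation of orbits, whereas the statement only provides flip Kakutani equivalence; so in the converse one must choose, on each infinite orbit closure, between $h\rest_{X_a(i)}$ and its inverse to land in a single un-flipped Kakutani class, and in the forward direction one must keep track of how the orbit produced by Corollary~\ref{cor:one_orbit} sits relative to $g$ versus $g\inv$. Once the orientations are organised, the remainder is a routine assembly of Corollary~\ref{cor:one_orbit}, Proposition~\ref{prop:remove_periodic}, Proposition~\ref{prop:Kakutani_weld}, Lemma~\ref{lem:Kakutani_equivalence}, and the additivity identity $o_g(h)=\sum_i o_{g_{X_a(i)}}(h_{X_a(i)})$.
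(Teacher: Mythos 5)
Your proposal is correct and follows essentially the same route as the paper: reduce to the aperiodic case via Proposition~\ref{prop:remove_periodic}, use additivity of the orbit number over the infinite orbit closures together with Corollary~\ref{cor:one_orbit} for the forward direction, and choose orientations so that Proposition~\ref{prop:Kakutani_weld} applies for the converse. The only difference is that you spell out the additivity identity and the orientation bookkeeping slightly more explicitly than the paper does.
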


\begin{proof}
By Proposition~\ref{prop:remove_periodic}, we may suppose that $h$ is aperiodic.  Write $X_1,\dots,X_m$ for the distinct infinite orbit closures of $h$.

If the systems $(X_i,h)$ are all flip Kakutani equivalent, choose a homeomorphism $h_i$ on $X_i$, so that the systems $(X_i,h_i)$ are all Kakutani equivalent and $h$ acts as either $h_i$ or $h\inv_i$ on $X_i$.  By Proposition~\ref{prop:Kakutani_weld} there is a minimal homeomorphism $g$ of $X$ such that $h_i = g_{X_i}$; in particular, we can regard $h_i$ as an element of $\Fgp{g}$ with $o_g(h_i) = 1$.  It is then clear that $h \in \Fgp{g}$, with $o_g(h) = \sum^m_{i=1}o_g(h_i) = m$.

Conversely, suppose that $h$ is strongly \ppm, that is, there is a minimal homeomorphism $g$ such that $h \in \Fgp{g}$ with $o_g(h) = m$.  Let $h_i$ be the restriction of $h$ to $X_i$.  Then $o_g(h_i) \ge 1$ for each $i$, since there exist infinite orbits of $h_i$, and 
\[
m = o_g(h) = \sum^m_{i=1}o_g(h_i).
\]
We therefore have $o_g(h_i)=1$ for all $i$.  By Theorem~\ref{thm:sign} we see that $h^{\epsilon_i}_i \in \Fgpnn{g}$ for $\epsilon_i \in \{-1,1\}$, and by Theorem~\ref{thm:positive_conjugation}, $h^{\epsilon_i}_i$ is conjugate in $\Fgp{g}$ to some $h'_i \in \Fgpp{g}$.  Thus $h'_i$ is an aperiodic homeomorphism of $X_i$ that is strongly positive, with $o_g(h'_i)=1$; the only possibility is that $h'_i = g_{X_i}$.  Thus $(X_i,h_i)$ is flip Kakutani equivalent to $(X,g)$.
\end{proof}

\end{document}